\newcommand{\DOIFPDF}[2]{\ifx\pdfoutput\undefined #2\else#1\fi}
\newcommand{\EngineName}{\DOIFPDF{\texttt{pdf}\LaTeX}{\LaTeXe}}
\newcommand{\EM}{\ensuremath}
\newcommand{\PDFABLE}[2]{%
\newif\ifpdf%
\ifx\pdfoutput\undefined\pdffalse%
\else\pdftrue\pdfoutput=1\pdfcompresslevel=9\fi%
\ifpdf%
 \usepackage#1
 \usepackage[pdftex,%
             a4paper,%
             colorlinks,%
             citecolor=blue,%
             pagebackref,%
             plainpages=false]{hyperref}%
\else%
 \usepackage#2
 \usepackage{url}
\fi}
\newcommand{\@THMSTYLES}{%
  \newtheoremstyle{bodyrm}
  {3pt}
  {3pt}
  {}
  {}
  {\bfseries\sffamily}
  {.}
  { }
  {}
  \newtheoremstyle{bodyit}
  {3pt}
  {3pt}
  {\itshape}
  {}
  {\bfseries\sffamily}
  {.}
  { }
  {}
}
\newcommand{\THMEN}{%
  \@THMSTYLES
  \theoremstyle{bodyit}
  \newtheorem{thm}{Theorem}[section]%
  \newtheorem{cor}[thm]{Corollary}%
  \newtheorem{prop}[thm]{Proposition}%
  \newtheorem{lem}[thm]{Lemma}%
  \theoremstyle{bodyrm}%
  \newtheorem{defi}[thm]{Definition}%
  \newtheorem{xpl}[thm]{Example}%
  \newtheorem{exo}[thm]{Exercise}%
  \newtheorem{hyp}[thm]{Hypothesis}%
  \newtheorem{eur}[thm]{Heuristics}%
  \newtheorem{pro}[thm]{Problem}%
  \newtheorem{rem}[thm]{Remark}%
  \newtheorem{prp}[thm]{Property}%
}
\newcommand{\THMFR}{%
  \@THMSTYLES
  \theoremstyle{bodyit}
  \newtheorem{thm}{Théorème}[section]%
  \newtheorem{prop}[thm]{Proposition}%
  \newtheorem{lem}[thm]{Lemma}%
  \theoremstyle{bodyrm}%
  \newtheorem{rem}[thm]{Remarque}%
  %
}
\newcommand{\SMALLSECS}{%
 \renewcommand{\section}{\@startsection%
  {section}
  {1}
  {0em}
  {\baselineskip}
  {0.5\baselineskip}
  {\normalfont\large\bfseries}}
 \renewcommand{\subsection}{\@startsection%
  {subsection}
  {2}
  {0em}
  {\baselineskip}
  {0.25\baselineskip}
  {\normalfont\bfseries}}
}
\providecommand{\timenow}{\@tempcnta\time
\@tempcntb\@tempcnta
\divide\@tempcntb60
\ifnum10>\@tempcntb0\fi\number\@tempcntb
\multiply\@tempcntb60
\advance\@tempcnta-\@tempcntb
:\ifnum10>\@tempcnta0\fi\number\@tempcnta}
\newcommand{\versiondetravail}{%
 \renewcommand{\@evenfoot}{%
 \hfil{\tiny\texttt{%
   Version préliminaire, compilée le \today{} à \timenow.}\hfill}}%
 \renewcommand{\@oddfoot}{\@evenfoot}%
}
\newcommand{\dC}{\EM{\mathbb{C}}}
\newcommand{\dN}{\EM{\mathbb{N}}}
\newcommand{\dR}{\EM{\mathbb{R}}}
\newcommand{\bE}{\EM{\mathbf{E}}}
\newcommand{\bX}{\EM{\mathbf{X}}}
\newcommand{\al}{\alpha}
\newcommand{\be}{\beta}
\newcommand{\de}{\delta}
\newcommand{\Ga}{\Gamma}
\newcommand{\la}{\lambda}
\newcommand{\si}{\sigma}
\newcommand{\veps}{\varepsilon}
\newcommand{\ABS}[1]{\EM{{\left| #1 \right|}}} 
\newcommand{\BRA}[1]{\EM{{\left\{#1\right\}}}} 
\newcommand{\NRM}[1]{\EM{{\left\| #1\right\|}}} 
\newcommand{\PAR}[1]{\EM{{\left(#1\right)}}} 
\newcommand{\SBRA}[1]{\EM{{\left[#1\right]}}} 
\newcommand{\Det}[1]{\mathrm{Det}\,}
\renewcommand{\leq}{\leqslant}
\renewcommand{\geq}{\geqslant}
\renewcommand{\@evenfoot}
{\tiny Compiled \today{} \timenow{} by \EngineName{}.
  \normalfont\hfil \upshape Page {\thepage}.}
\renewcommand{\@oddfoot}{\@evenfoot}
\renewcommand{\@oddfoot}{\@evenfoot}
 \newtheorem{ass}[thm]{Assumption}
\title{LAN property for some  fractional type Brownian motion}
\author{Serge Cohen
\footnote{Institut de Math\'ematiques de Toulouse
 UMR 5219
31062 Toulouse, Cedex 9, France. Email: Serge.Cohen@math.univ-toulouse.fr}, 
Fabrice Gamboa\footnote{Institut de Math\'ematiques de Toulouse de Toulouse
 UMR 5219
31062 Toulouse, Cedex 9, France. Email: Fabrice.Gamboa@math.univ-toulouse.fr}, C\'eline Lacaux\footnote{
Institut \'Elie Cartan, UMR 7502, Nancy Universit\'e-CNRS-INRIA BIGS Project,  BP 70239, F-54506 Vandoeuvre-l\`es-Nancy, France. Email: Celine.Lacaux\@@{}iecn.u-nancy.fr}, Jean-Michel Loubes\footnote{Institut de Math\'ematiques de Toulouse
 UMR 5219
31062 Toulouse, Cedex 9, France. Email: Jean-Michel.Loubes@math.univ-toulouse.fr}
}
\newcommand{\mykeywords}{%
 Asymptotic Statistics,
Maximum Likelihood expansion,
Fractional Brownian motion.
}
\begin{document}

\maketitle
\begin{abstract}
We study asymptotic expansion of the likelihood of a  certain class of Gaussian processes characterized by their spectral density $f_\theta$. We consider  the case where  $f_\theta\PAR{x} \sim_{x\to 0} \ABS{x}^{-\al(\theta)}L_\theta(x)$
with $L_\theta$ a slowly varying function and $\al\PAR{\theta}\in (-\infty,1)$.  We prove LAN property for these models which include in particular  fractional Brownian motion  
or ARFIMA processes.
\end{abstract}
{\bf Keywords:} \mykeywords 
\tableofcontents
\section{Introduction}
Local asymptotic normality (LAN) property is a fundamental concept in asymptotic statistics. Originated by Wald in \cite{MR0012401} and developed by Le Cam in \cite{MR0126903}, it relies on the idea of approximating a sequence of statistical models by a family of Gaussian distributions.  Its consequence is that the initial model is approximately normal and thus inherits, in an asymptotic sense, the simple structure of 
normal models. Among the many applications in mathematical statistics, local asymptotic normality is essential in asymptotic optimality theory and also explains the asymptotic normality of certain estimators such as the maximum likelihood estimator for instance. We refer for instance to \cite{MR762984} or in \cite{MR1652247} for applications of LAN property. When dealing with inference on the parameter, LAN property will enable to assess optimality of any estimation procedure for this parameter which governs the behaviour of the random process. Hence LAN property is a powerful framework to understand probabilistic properties of a stochastic model.\vskip .1in
Many work has been done to prove LAN property for a large number of observation models such as i.i.d sequences of random variables parametrized by a parameter or Gaussian processes in~\cite{MR1652247}   or more complicated  random processes such as multifractal processes in \cite{LoubLan}, AR or ARMA based models in \cite{MR1364260,MR1897918} or extreme models in \cite{MR2729419} for instance. \\
\indent We focus in this paper on statistical inference for empirical estimation of the
parameters of spectral density of a certain class of Gaussian processes.  We consider a stationary centered Gaussian process $X_n$ whose spectral density is indexed by a parameter $\theta$  and satisfies the condition 
$$
f_\theta\PAR{x} \sim_{x\to 0} \ABS{x}^{-\al(\theta)}L_\theta(x)
$$ with $L_\theta$ a slowly varying function and $\al\PAR{\theta}\in (-\infty,1)$. More
precisely, we aim at proving Local Asymptotic Normality (LAN) for the
 model where we observe a sample of $n$ observations $\bX_n=(X_1,\dots,X_n)$ by studying an asymptotic expansion of the log likelihood. For this,  a precise control over the asymptotic behaviour of the some Toeplitz matrices linked with $f_\theta$ will be required. It relies on the results in \cite{judith2F}. 
  
In very particular, our assumptions  (see section \ref{s:LAN}) are fulfilled by fractional Gaussian noises, which are defined as increments of fractional Brownian motions (see \cite{Kolmo40,Mand68}).  From the LAN property fulfilled by fractional Gaussian noises, we deduce the LAN property when the observation model is a time-discretized fractional Brownian motion, 
which is not any more a stationary model.  Moreover observation models of autoregressive fractionally integrated moving average processes (ARFIMA(p,d,q)), defined as a fractionally differenced ARMA processes in \cite{GJ80,Hosking81}, satisfy also our assumptions and are covered by our results. Note that, when $d\le -1$, ARFIMA(p,d,q) are non-invertible processes (see \cite{Bondon}). 
 
 \vskip .1in
The paper falls into the following parts. Section ~\ref{s:intro} is devoted to recall some basic properties of Toepliz matrices. Then Section~\ref{s:LAN} states the general LAN property for the considered processes. Section~\ref{s:appli} is devoted to two examples that undergo the required assumptions (fractional Brownian noises and ARFIMA processes) and the LAN property for the non stationary model provided by the fractional Brownian motion. Most of the proofs are postponed to Section~\ref{s:appen}.

\section{Notations and some preliminary results on Toeplitz matrices}\label{s:intro}

For any integrable  symmetric function $f:[-\pi,\pi]\rightarrow \overline{\dR}$ and any integer $n\in \mathbb{N}\backslash\BRA{0}$, let us consider the real Toeplitz matrix
\begin{equation}
\label{Tn}
T_n(f)=\PAR{ \int_{-\pi}^{\pi}\textup{e}^{i(k-j)x} f(x)dx}_{1\leq k,j \leq n}.
\end{equation}
Observe that 
 if $f$ is nonnegative, $T_n(f)$ is a  nonnegative matrix. Observe also that if $f\ne 0$ on a non neglectible  set, $T_n(f)$ is positive and then invertible. 

Before  stating some results on Toeplitz matrices, let us introduce some notations and recall some basic facts. First, if  the $n\times n$ matrix $A$ is nonnegative and Hermitian,  hence the matrix $A^{1/2}$ defined as  the solution of  $A=(A^{1/2})^2$, exists and is  a nonnegative Hermitian matrix.   
In addition, the spectral norm of the $n\times n$ matrix $A$ is 
$$
\NRM{A}_{2,n}=\sup_{x\in\dC^n}\PAR{\frac{x^*A^*Ax}{x^*x}}^{1/2},
$$
where $A^*$ is the conjugate transpose of $A$.   We recall that $\NRM{\cdot}_{2,n}$ is a multiplicative norm, that is for any $n\times n$ matrices $A,B$,
\begin{equation}
\label{MN}
\NRM{AB}_{2,n}\le \NRM{A}_{2,n}\NRM{B}_{2,n}. 
\end{equation}
Let us also recall that  for any matrix $A$ and any $x\in\dC^n$,
\begin{equation}
\label{MN2}
x^*Ax\le x^*x \NRM{A}_{2,n}= \NRM{A}_{2,n} \NRM{x}^2,
\end{equation}
with $\NRM{y}$ the Euclidean norm of $y\in\dR^n$ (see \cite{Graybill} for example). \\

One of the main tools we use in this paper is the following lemma, which  gives a bound for the spectral norm of some products of the form  $ T_n\PAR{f}^{-1/2}T_n\PAR{g}^{1/2}$ under some assumptions for the functions $f$ and $g$. This lemma, given  in \cite{judith2F} (full version of \cite{judith2}) , generalizes  Lemma 5.3 in \cite{dah}.

\begin{lem}
\label{NormS}

Let $f$ and $g$ be nonnegative symmetric functions defined on $[-\pi,\pi]$. Assume that  there exist some  constants $c_1,c_2\in (0,+\infty)$  and $\be_1,\be_2\in (-\infty,1)$ such that for any $x\in [-\pi,\pi]\backslash\BRA{0},$
\begin{equation}
\label{Cfg}
f\PAR{x}\ge c_1\ABS{x}^{-\be_1}\quad \textrm{ and  } \quad g\PAR{x}\le c_2\ABS{x}^{-\be_2}.
\end{equation}
Then, there exists a constant $K$ which only depends on $(c_1,c_2,\be_1,\be_2)$ such that for any integer $n\ge 1$, 
$$
\NRM{T_n\PAR{f}^{-1/2}T_n\PAR{g}^{1/2}}_{2,n}=\NRM{T_n\PAR{g}^{1/2}T_n\PAR{f}^{-1/2}}_{2,n}\le  K n^{\max\PAR{(\be_2-\be_1)/2,0}}.  
$$
\end{lem}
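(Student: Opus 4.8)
The plan is to reduce the statement to pure power weights by a short operator-theoretic argument and then to estimate the resulting Rayleigh quotient over trigonometric polynomials of degree less than $n$. First I would dispose of the equality of the two norms: $f$ and $g$ being real and symmetric, $T_n(f)$ and $T_n(g)$ are real symmetric matrices, and since $f>0$ a.e.\ (because $\ABS{\lambda}^{-\be_1}>0$ for $\lambda\ne 0$) the matrix $T_n(f)$ is positive definite; hence $T_n(f)^{-1/2}$ and $T_n(g)^{1/2}$ are symmetric, $\PAR{T_n(f)^{-1/2}T_n(g)^{1/2}}^{*}=T_n(g)^{1/2}T_n(f)^{-1/2}$, and the two spectral norms agree because $\NRM{A}_{2,n}=\NRM{A^{*}}_{2,n}$; it thus suffices to bound $\NRM{T_n(f)^{-1/2}T_n(g)^{1/2}}_{2,n}$. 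Using the identity $u^{*}T_n(h)u=\int_{-\pi}^{\pi}\ABS{\sum_{j=1}^{n}u_je^{-ij\lambda}}^{2}h(\lambda)\,d\lambda$, valid for $h\ge 0$ integrable and symmetric and $u\in\dC^{n}$, condition~\eqref{Cfg} reads $T_n(f)\ge c_1T_n(g_1)$ and $T_n(g)\le c_2T_n(g_2)$ in the order of Hermitian matrices, where $g_i(\lambda):=\ABS{\lambda}^{-\be_i}$. Since $A\mapsto A^{-1}$ is antitone on positive matrices, conjugation preserves this order, and $XY$ and $YX$ share the same spectrum, one gets
\[
\NRM{T_n(f)^{-1/2}T_n(g)^{1/2}}_{2,n}^{2}=\lambda_{\max}\!\PAR{T_n(g)^{1/2}T_n(f)^{-1}T_n(g)^{1/2}}\le\frac{c_2}{c_1}\,\NRM{T_n(g_1)^{-1/2}T_n(g_2)^{1/2}}_{2,n}^{2},
\]
which reduces everything to the case $f=g_1$, $g=g_2$, for which one moreover has the variational identity
\[
\NRM{T_n(g_1)^{-1/2}T_n(g_2)^{1/2}}_{2,n}^{2}=\sup_{\phi\ne 0}\ \frac{\int_{-\pi}^{\pi}\ABS{\phi(\lambda)}^{2}\ABS{\lambda}^{-\be_2}\,d\lambda}{\int_{-\pi}^{\pi}\ABS{\phi(\lambda)}^{2}\ABS{\lambda}^{-\be_1}\,d\lambda},
\]
the supremum being over trigonometric polynomials $\phi(\lambda)=\sum_{j=1}^{n}u_je^{-ij\lambda}$.

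Next I would split into two cases. If $\be_2\le\be_1$, then $\ABS{\lambda}\le\pi$ forces $\ABS{\lambda}^{-\be_2}\le\pi^{\be_1-\be_2}\ABS{\lambda}^{-\be_1}$, so the quotient is at most $\pi^{\be_1-\be_2}$ and $\max\PAR{(\be_2-\be_1)/2,0}=0$ is the right exponent. If $\be_2>\be_1$, I would split the numerator at $\ABS{\lambda}=\pi/n$: on $\ABS{\lambda}>\pi/n$ one has $\ABS{\lambda}^{-\be_2}=\ABS{\lambda}^{-\be_1}\ABS{\lambda}^{-(\be_2-\be_1)}\le\pi^{\be_1-\be_2}n^{\be_2-\be_1}\ABS{\lambda}^{-\be_1}$, so this part is at most $\pi^{\be_1-\be_2}n^{\be_2-\be_1}\int\ABS{\phi}^{2}\ABS{\lambda}^{-\be_1}$; on $\ABS{\lambda}\le\pi/n$, since $\be_2<1$ one has $\int_{\ABS{\lambda}\le\pi/n}\ABS{\lambda}^{-\be_2}\,d\lambda=\tfrac{2}{1-\be_2}\PAR{\pi/n}^{1-\be_2}$, and it only remains to bound $\sup_{\ABS{\lambda_0}\le\pi/n}\ABS{\phi(\lambda_0)}^{2}$. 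The key tool here is the weighted Nikolskii-type inequality
\[
\sup_{\ABS{\lambda_0}\le\pi/n}\ABS{\phi(\lambda_0)}^{2}\ \le\ C_{\be_1}\,n^{1-\be_1}\int_{-\pi}^{\pi}\ABS{\phi(\lambda)}^{2}\ABS{\lambda}^{-\be_1}\,d\lambda ,
\]
valid for all trigonometric polynomials of degree $<n$. Granting it, the part on $\ABS{\lambda}\le\pi/n$ is bounded by a constant times $n^{1-\be_1}\,n^{-(1-\be_2)}\int\ABS{\phi}^{2}\ABS{\lambda}^{-\be_1}=n^{\be_2-\be_1}\int\ABS{\phi}^{2}\ABS{\lambda}^{-\be_1}$; adding the two parts gives the quotient $\le C\,n^{\be_2-\be_1}$, hence $\NRM{T_n(g_1)^{-1/2}T_n(g_2)^{1/2}}_{2,n}\le\sqrt{C}\,n^{(\be_2-\be_1)/2}$, and with the reduction step this yields the lemma with $K$ depending only on $(c_1,c_2,\be_1,\be_2)$.

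The one substantive point is thus the weighted Nikolskii inequality, which I would prove in two regimes. If $\be_1\le 0$, it follows from the elementary estimate $\NI{\phi}^{2}\le\tfrac{n}{2\pi}\int_{-\pi}^{\pi}\ABS{\phi}^{2}$ (\cs applied to the $n$ Fourier coefficients of $\phi$) together with $\int_{-\pi}^{\pi}\ABS{\phi}^{2}\ABS{\lambda}^{-\be_1}\ge\int_{\ABS{\lambda}\ge 1/n}\ABS{\phi}^{2}\ABS{\lambda}^{-\be_1}\ge n^{\be_1}\int_{\ABS{\lambda}\ge 1/n}\ABS{\phi}^{2}\ge\PAR{1-\tfrac1\pi}n^{\be_1}\int_{-\pi}^{\pi}\ABS{\phi}^{2}$, the last step using $\NI{\phi}^{2}$ once more to discard $\int_{\ABS{\lambda}<1/n}\ABS{\phi}^{2}$; combining, $\sup_{\ABS{\lambda_0}\le\pi/n}\ABS{\phi(\lambda_0)}^{2}\le\NI{\phi}^{2}\le C_{\be_1}\,n^{1-\be_1}\int\ABS{\phi}^{2}\ABS{\lambda}^{-\be_1}$. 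If $\be_1\in(0,1)$ the unweighted Nikolskii inequality only gives the exponent $1$, so instead I would start from the reproducing-kernel identity $\phi(\lambda_0)=\tfrac{1}{2\pi}\int_{-\pi}^{\pi}\phi(\lambda)D_n(\lambda-\lambda_0)\,d\lambda$, with $D_n(\mu)=\sum_{j=1}^{n}e^{ij\mu}$, and apply \cs with weight $\ABS{\lambda}^{-\be_1}$ to obtain
\[
\ABS{\phi(\lambda_0)}^{2}\le\frac{1}{4\pi^{2}}\PAR{\int_{-\pi}^{\pi}\ABS{\phi}^{2}\ABS{\lambda}^{-\be_1}}\PAR{\int_{-\pi}^{\pi}\ABS{D_n(\lambda-\lambda_0)}^{2}\ABS{\lambda}^{\be_1}\,d\lambda};
\]
using $\ABS{D_n(\mu)}\le\min(n,\pi/\ABS{\mu})$ and $\ABS{\lambda_0}\le\pi/n$ and splitting the last integral at $\ABS{\lambda}=\pi/n$, its inner part is $\lesssim n^{2}\cdot n^{-\be_1}\cdot n^{-1}=n^{1-\be_1}$ and its outer part is $\lesssim\int_{\pi/n}^{\pi}\lambda^{\be_1-2}\,d\lambda\lesssim n^{1-\be_1}$ (here $\be_1<1$ enters), so that factor is $\le C_{\be_1}n^{1-\be_1}$, as required.

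The genuinely delicate point is exactly this pointwise estimate when $\be_1\in(0,1)$: it is equivalent to the sharp lower bound $\gtrsim n^{\be_1-1}$ for the Christoffel function of the weight $\ABS{\lambda}^{-\be_1}$ at the singularity $\lambda=0$, a place where the plain (unweighted) Nikolskii inequality loses a factor $n^{\be_1}$, so one really has to exploit the concentration of the Dirichlet kernel against the weight near the origin. Everything else --- the operator-theoretic reduction to pure powers and the split at scale $1/n$ --- is routine.
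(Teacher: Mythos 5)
Your argument is correct, but note that the paper itself does not prove Lemma~\ref{NormS}: the result is imported from \cite{judith2F}, where it generalizes Lemma~5.3 of \cite{dah}, so the only comparison available is with that external source rather than with anything in the present text. Your reduction --- rewriting the squared spectral norm as the Rayleigh quotient $\sup_{\phi}\int\ABS{\phi}^2\ABS{\lambda}^{-\be_2}d\lambda\,\big/\int\ABS{\phi}^2\ABS{\lambda}^{-\be_1}d\lambda$ over trigonometric polynomials with frequencies $1,\dots,n$, after replacing $f,g$ by the pure powers via the Loewner order and the antitonicity of the matrix inverse --- is the same reduction that underlies Dahlhaus's original lemma, and your treatment of the two regimes ($\be_2\le\be_1$ trivially, $\be_2>\be_1$ by splitting at $\ABS{\lambda}=\pi/n$) is the standard one. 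You correctly isolate the real content as the weighted Nikolskii/Christoffel bound $\sup_{\ABS{\lambda_0}\le\pi/n}\ABS{\phi(\lambda_0)}^2\le C_{\be_1} n^{1-\be_1}\int\ABS{\phi}^2\ABS{\lambda}^{-\be_1}d\lambda$, and your Dirichlet-kernel proof of it for $\be_1\in(0,1)$ is sound; the only point you gloss over is that $\ABS{\lambda-\lambda_0}$ need not be comparable to $\ABS{\lambda}$ just above the cut, but this is harmless since $\ABS{D_n(\mu)}\le\min\PAR{n,\pi/\ABS{\mu}}$ yields $\ABS{D_n(\lambda-\lambda_0)}\le 2\pi/\ABS{\lambda}$ on all of $\ABS{\lambda}\ge\pi/n$ (use $n\le 2\pi/\ABS{\lambda}$ when $\ABS{\lambda}\le 2\pi/n$ and $\ABS{\lambda-\lambda_0}\ge\ABS{\lambda}/2$ otherwise). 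What your route buys is a short, self-contained and quantitative proof with an explicit constant $K$ depending only on $(c_1,c_2,\be_1,\be_2)$; it also makes transparent the remark following the lemma on the uniformity of $K$ for $\be_1,\be_2$ ranging in a compact subinterval of $(-\infty,1)$, since every constant in your argument is continuous in $(\be_1,\be_2)$.
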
 

\medskip

\begin{rem} In the previous lemma, observe that  the assumption  on $f$  ensures that $T_n\PAR{f}^{-1/2}$ exists. Moreover, one can choose the constant $K$ independently of $(\be_1,\be_2)$ and such that the conclusion holds for any $\beta_1,\beta_2\in [a,b]$. \\
\end{rem}

In our framework, $f$ depends on an unknown parameter $\theta$ and is the spectral density of a  centered  Gaussian stationary sequence $\PAR{X_n}_n$. This spectral density will be denoted $f_\theta$ and is assumed to be such that 
$$
f_\theta\PAR{x} \sim_{x\to 0} \ABS{x}^{-\al(\theta)}L_\theta(x)
$$
with $L_\theta$ a slowly varying function and $\al\PAR{\theta}\in (-\infty,1)$.  Then next theorem deals with the uniform behavior in $\theta$ as $n\to +\infty$ of  
$$
{\rm{tr}}\SBRA{\prod_{\ell=1}^p \PAR{T_n\PAR{f_{\theta}}}^{-1} T_n\PAR{g_{\theta,\ell}}},
$$ where $g_{\theta,\ell}$ denotes a spectral density or one of its derivatives which undergoes some technical assumptions.
If the true value $\theta_0$ of the parameter is such that 
$\al\PAR{\theta_0}\in(-1,1)$, this theorem is one of the main tools we use to obtain the LAN property. It allows us to consider a process $\PAR{X_n}_n$ which admits antipersistence ($\al(\theta_0)<0 $), short memory ($\al\PAR{\theta_0}=0$) or long memory ($\al\PAR{\theta_0}\in (0,1)$). This theorem is  stated as Theorem 5 in \cite{judith2F} (full version of \cite{judith2}). It  generalizes   Theorem 2  in \cite{judith}, which is already a uniform version of Theorem~1.a \cite{fox} and Theorem 5.1 in \cite{dah}. 
\begin{thm}
\label{ThRou}

Let   $\Theta^*\subset \dR^m$ be a compact set and $p\in\dN\backslash\BRA{0}$.  For any $1\le \ell \le p$, consider  $f_{\ell}\,: \Theta^*\times [-\pi,\pi]\rightarrow [0,\infty] $ and  $g_{\ell} \,:\Theta^*\times [-\pi,\pi]\rightarrow \overline{\dR}$ two symmetric   functions with respect to their second variable.  
In the following,
$$
f_{\theta,\ell}=f_{\ell}\PAR{\theta,\cdot}\ \textrm{ and }\ 
g_{\theta,\ell}=g_{\ell}\PAR{\theta,\cdot}.
$$
Assume that the following conditions hold. 
\begin{enumerate}
\item For any $1\le \ell\le p$, 
 for any $\theta\in \Theta^*$, $f_{\theta,\ell}$ and $g_{\theta,\ell}$ are  differentiable on $[-\pi,\pi]\backslash\BRA{0}$. Moreover, for any $1\le \ell\le p$, $f_{\ell}$, $\frac{\partial}{\partial x} f_\ell$, $g_{\ell}$ 
  and $\frac{\partial}{\partial x} g_\ell$ are continuous on $\Theta^*\times [-\pi,\pi]\backslash\BRA{0}$.

 \item There exist two continuous functions $\alpha :\Theta^* \rightarrow (-1,1)$  and  $\be :\Theta^* \rightarrow (-\infty,1)$ such that for any $\de>0$,  for every $(\theta,x)\in  \Theta^*\times [-\pi,\pi]\backslash\BRA{0}$ and any $1\le \ell\le p$
 \begin{enumerate}
\item  $
c_{_{1,\de,\Theta^*}}\ABS{x}^{-\al(\theta)+\de}\le  f_\ell(\theta,x)\le c_{_{2,\de,\Theta^*}} \ABS{x}^{-\al(\theta)-\de}$
\item $
\ABS{\frac{\partial}{\partial x} f_\ell(\theta,x)}\le c_{_{2,\de,\Theta^*}} \ABS{x}^{-\al(\theta)-1-\de}
$
\item and 
$
\ABS{g_\ell(\theta,x)}\le c_{_{2,\de,\Theta^*}} \ABS{x}^{-\be(\theta)-\de},
$
\end{enumerate}
with  $c_{_{i,\de,\Theta^*}}, \: i \in \{1,2\}$  some finite positive constants which only depend on $\de$ and $\Theta^*$.
 \item For any $\theta\in\Theta^*$, $p\PAR{\beta(\theta)-\al(\theta)}<1$. 
\end{enumerate} 
Then, 
$$
\lim_{n\to+\infty} \sup_{\theta\in\Theta^*} \ABS{\frac{1}{n}{\rm{tr}}\SBRA{\prod_{\ell=1}^p \PAR{T_n\PAR{f_{\theta,\ell}}}^{-1} T_n\PAR{g_{\theta,\ell}}} 
-\frac{1}{2\pi} \int_{-\pi}^{\pi} \prod_{j=1}^p \PAR{f_{\theta,\ell}\PAR{x}}^{-1}g_{\theta,\ell}\PAR{x} dx }=0. 
$$
\end{thm}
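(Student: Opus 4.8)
The plan is to reduce the statement to the already-known case $p=1$ (essentially Theorem~2 in \cite{judith}, itself a uniform version of \cite{fox,dah}) by an induction on $p$, combined with a telescoping argument that controls, in operator norm, the difference between the ``true'' trace and the trace obtained after replacing all but one of the Toeplitz inverses by scalar integrals. First I would fix $\delta>0$ small enough (to be sent to $0$ at the end) so that, writing $\beta_\ell^\pm=\beta(\theta)\pm\delta$ and $\alpha^\pm=\alpha(\theta)\pm\delta$, the hypotheses of Lemma~\ref{NormS} are available with exponents that stay in a fixed compact interval $[a,b]$ uniformly in $\theta\in\Theta^*$; by the Remark following Lemma~\ref{NormS} the constant $K$ can then be taken independent of $\theta$. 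Assumption~(3), $p(\beta(\theta)-\alpha(\theta))<1$, is exactly what guarantees that the product of the $p$ norm bounds $Kn^{\max((\beta_2-\beta_1)/2,0)}$ coming from Lemma~\ref{NormS} grows strictly slower than $n$, which is the scale by which we divide; this is the structural reason the theorem holds and where hypothesis~(3) must be used.

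The key steps, in order, are as follows. Step~1: normalisation. Write each factor as
$$
\PAR{T_n(f_{\theta,\ell})}^{-1}T_n(g_{\theta,\ell})
= \PAR{T_n(f_{\theta,\ell})}^{-1/2}\Bigl[\PAR{T_n(f_{\theta,\ell})}^{-1/2}T_n(g_{\theta,\ell})\PAR{T_n(f_{\theta,\ell})}^{-1/2}\Bigr]\PAR{T_n(f_{\theta,\ell})}^{1/2},
$$
and use the cyclic invariance of the trace to move the outer half-powers around the product, so that $\tfrac1n\,\mathrm{tr}[\prod_\ell (T_n(f_{\theta,\ell}))^{-1}T_n(g_{\theta,\ell})]$ becomes $\tfrac1n\,\mathrm{tr}$ of a product of the Hermitian blocks $A_{\theta,\ell}:=(T_n(f_{\theta,\ell}))^{-1/2}T_n(g_{\theta,\ell})(T_n(f_{\theta,\ell}))^{-1/2}$ conjugated by the ``transport'' factors $(T_n(f_{\theta,\ell}))^{1/2}(T_n(f_{\theta,\ell+1}))^{-1/2}$; each such transport factor, and each $A_{\theta,\ell}$ (after a further split $T_n(g_{\theta,\ell})=T_n(g_{\theta,\ell}^+)-$ negative part, or directly bounding $|g_\ell|$ by $c|x|^{-\beta(\theta)-\delta}$ and applying Lemma~\ref{NormS} with $g=|g_\ell|$ against $f=f_{\theta,\ell}$), has spectral norm $\le Kn^{\max((\beta(\theta)-\alpha(\theta))/2,0)+o(1)}$ uniformly in $\theta$. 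Step~2: telescoping. For $p\ge 2$, compare $\mathrm{tr}[\prod_{\ell=1}^p (T_n(f_{\theta,\ell}))^{-1}T_n(g_{\theta,\ell})]$ with the ``factorised'' quantity $\mathrm{tr}[(T_n(f_{\theta,p}))^{-1}T_n(g_{\theta,p})]\cdot\prod_{\ell=1}^{p-1}\tfrac{1}{2\pi}\int (f_{\theta,\ell})^{-1}g_{\theta,\ell}$ by inserting, one at a time, the difference $(T_n(f_{\theta,\ell}))^{-1}T_n(g_{\theta,\ell}) - c_{\theta,\ell}\,\Id$ with $c_{\theta,\ell}=\tfrac{1}{2\pi}\int (f_{\theta,\ell})^{-1}g_{\theta,\ell}$; the crucial point is the elementary operator-norm fact that $\|T_n(f)^{-1}T_n(g)-c\,\Id\|_{2,n}$ can be estimated through Lemma~\ref{NormS} by $Kn^{\max((\beta-\alpha)/2,0)}$ while the \emph{trace} of this difference times the remaining product is controlled by $\tfrac1n\cdot n\cdot(\text{product of norm bounds})$, which is $o(1)$ precisely when $p(\beta(\theta)-\alpha(\theta))<1$ — and uniformly so since all constants are $\theta$-free. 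Step~3: the $p=1$ input. For a single factor one invokes the cited uniform generalisation of the Fox--Taqqu / Dahlhaus trace formula, whose hypotheses are (1)--(2) here with $p=1$; this gives $\lim_n\sup_\theta|\tfrac1n\mathrm{tr}[T_n(f_\theta)^{-1}T_n(g_\theta)]-\tfrac1{2\pi}\int f_\theta^{-1}g_\theta|=0$. Step~4: assemble and let $\delta\downarrow 0$, using the continuity of $\alpha,\beta$ on the compact $\Theta^*$ and dominated convergence for the integrals $\int (f_{\theta,\ell})^{-1}g_{\theta,\ell}\,dx$ (integrability near $0$ is ensured by $\beta(\theta)-\alpha(\theta)<1/p\le 1$).

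I expect the main obstacle to be Step~2: making the telescoping genuinely \emph{uniform} in $\theta$ and verifying that every intermediate quantity is a trace of a product in which at most one factor is ``large'' (norm $n^{(\beta-\alpha)/2}$) and all others are either bounded or already replaced by scalars — so that the total is $\tfrac1n$ times $n$ (from the trace) times $n^{(p-1)(\beta-\alpha)/2}$ at worst, hence $o(1)$ under hypothesis~(3). One must be careful that the transport factors $(T_n(f_{\theta,\ell}))^{1/2}(T_n(f_{\theta,\ell+1}))^{-1/2}$ are also uniformly bounded, which again follows from Lemma~\ref{NormS} applied with the roles of $f$ and $g$ played by two functions both comparable to $|x|^{-\alpha(\theta)\pm\delta}$, giving exponent $\max(\delta,0)\to 0$. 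The estimate $\|T_n(f)^{-1}T_n(g)-c\Id\|_{2,n}$ itself is standard (it reduces, via the same conjugation trick, to bounding $\|A_{\theta,\ell}\|_{2,n}$ plus a rank/trace argument) but writing it so that the constant depends only on $(c_1,c_2,\beta_1,\beta_2)$ and a compact range of exponents is the technical heart of the argument.
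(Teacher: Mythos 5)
First, a point of comparison: the paper does not actually prove Theorem~\ref{ThRou}; it is imported verbatim as Theorem~5 of \cite{judith2F}, so there is no in-house proof to measure your plan against. Judged on its own terms, however, your plan has a genuine flaw at its central step.

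The telescoping in your Step~2 aims at the wrong limit. You propose to replace, one factor at a time, $\PAR{T_n(f_{\theta,\ell})}^{-1}T_n(g_{\theta,\ell})$ by the scalar matrix $c_{\theta,\ell}\,\mathrm{Id}$ with $c_{\theta,\ell}=\frac{1}{2\pi}\int f_{\theta,\ell}^{-1}g_{\theta,\ell}$, so that your comparison quantity is the \emph{product of the integrals} $\prod_{\ell}c_{\theta,\ell}$. But the theorem asserts convergence to $\frac{1}{2\pi}\int_{-\pi}^{\pi}\prod_{\ell}f_{\theta,\ell}^{-1}(x)\,g_{\theta,\ell}(x)\,dx$, the \emph{integral of the product}. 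Already for $p=2$, $f_1=f_2\equiv 1$, $g_1=g_2=g$, one has $\frac{1}{n}{\rm{tr}}\SBRA{\PAR{T_n(1)^{-1}T_n(g)}^2}\to\frac{1}{2\pi}\int g^2$, not $\PAR{\frac{1}{2\pi}\int g}^2$, so no telescoping toward the factorised quantity can succeed. The route actually taken in the cited sources \cite{fox,dah,judith,judith2F} is to replace each inverse $T_n(f_{\theta,\ell})^{-1}$ by the Toeplitz matrix $\frac{1}{(2\pi)^2}T_n(1/f_{\theta,\ell})$, telescope on \emph{that} difference, and then invoke the first Szeg\H{o}-type limit theorem for traces of products of Toeplitz matrices, which is what produces the integral of the product in the limit. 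Your Step~1 normalisation and the use of Lemma~\ref{NormS} together with hypothesis~(3) to keep the accumulated powers of $n$ under control are in the right spirit, but they are deployed around the wrong pivot. Relatedly, the quantitative claim closing Step~2 does not hold: bounding the trace of an $n\times n$ product by $n$ times the product of spectral norms yields $\frac{1}{n}\cdot n\cdot n^{p\PAR{\beta(\theta)-\alpha(\theta)}/2}$, which under hypothesis~(3) is only $O\PAR{n^{1/2}}$, not $o(1)$; one needs trace-norm (not operator-norm) control of the single replaced factor, and that is exactly what the Toeplitz --- rather than scalar --- approximation of the inverse provides.
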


\begin{rem} Observe that under Conditions 1. and 2., $f_\ell^{-1}=1/f_\ell$ is continuous on $\Theta^*\times [-\pi,\pi]\backslash\BRA{0}$, as assumed in Theorem 5 of \cite{judith2F}. \\
\end{rem}

If the true value $\theta_0$  of the parameter is such that $\al\PAR{\theta_0}\le -1$, the previous theorem can not be applied. However, the following  theorem,  which is a simple consequence of Lemma~8 in \cite{judith2F},  provides a sufficient property to establish the LAN property. In particular, it allows us to study the LAN property for ARFIMA models whose order of differentiability are lower than $1/2$, which includes some non invertible models. 

\begin{thm}
\label{ThRou2}

Let   $\Theta^*=B\PAR{\theta_0,r}\subset \dR^m$ be the closed Euclidean ball centered at $\theta_0$ with radius $r$ and let  $p\in\dN\backslash\BRA{0}$.  Consider  $f\,: \Theta^*\times [-\pi,\pi]\rightarrow [0,\infty] $ and for $1\le \ell\le p$, $g_{\ell} \,:\Theta^*\times [-\pi,\pi]\rightarrow \overline{\dR}$ some symmetric  functions in their second variable. 
In the following,
$$
f_{\theta}=f\PAR{\theta,\cdot}\ \textrm{ and }\ 
g_{\theta,\ell}=g_{\ell}\PAR{\theta,\cdot}.
$$
Assume that the following conditions hold. 

\begin{enumerate}
\item  The functions $f$ and $g_{\ell}$ satisfy assumption 1  of Theorem \ref{ThRou}. 
\item There exists a continuous function  $\alpha :\Theta^* \rightarrow (-\infty,1/2)$  such that for any $\de>0$,  for every $(\theta,x)\in  \Theta^*\times [-\pi,\pi]\backslash\BRA{0}$ and any $1\le \ell\le p$, assertion 3(a), 3(b)  of Theorem~\ref{ThRou} are fulfilled (with $f_\ell=f$), assertion 3(c) of Theorem~\ref{ThRou} holds with $\beta=\al$ and 
$$
\ABS{\frac{\partial}{\partial x} g_{\theta,\ell}(x)}\le c_{_{2,\de,\Theta^*}}\ABS{x}^{-\al\PAR{\theta}-1-\de}.
$$
\end{enumerate} 
Then,  for $r$ small enough, 
$$
\lim_{n\to+\infty} \sup_{\theta\in\Theta^*} \ABS{\frac{1}{n}{\rm{tr}}\SBRA{\prod_{\ell=1}^p \PAR{T_n\PAR{f_{\theta}}}^{-1} T_n\PAR{g_{\theta,\ell}}} 
-\frac{1}{2\pi} \int_{-\pi}^{\pi} {f_{\theta}^{-p}\PAR{x}}\prod_{j=1}^p g_{\theta,\ell}\PAR{x} dx }=0. 
$$

\end{thm}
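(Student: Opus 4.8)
The plan is to obtain this as a direct consequence of Lemma~8 of \cite{judith2F}; concretely I would (i) rewrite the quantity so that the cancellation between the $p$ factors $\PAR{T_n\PAR{f_\theta}}^{-1}$ and the copy of $f_\theta$ hidden inside each $g_{\theta,\ell}$ becomes manifest, (ii) check that Conditions~1 and~2 imply the hypotheses of that lemma, and (iii) invoke it. For~(i) I would set
\[
w_{\theta,\ell}\PAR{x}:=\frac{g_{\theta,\ell}\PAR{x}}{f_{\theta}\PAR{x}},\qquad x\in[-\pi,\pi]\backslash\BRA{0},\qquad 1\le\ell\le p,
\]
so that $f_{\theta}^{-p}\prod_{\ell=1}^{p}g_{\theta,\ell}=\prod_{\ell=1}^{p}w_{\theta,\ell}$ and the target integral is just $\frac{1}{2\pi}\int_{-\pi}^{\pi}\prod_{\ell=1}^{p}w_{\theta,\ell}\PAR{x}\,dx$; heuristically $\PAR{T_n\PAR{f_\theta}}^{-1}T_n\PAR{g_{\theta,\ell}}$ should behave like $\PAR{2\pi}^{-1}T_n\PAR{w_{\theta,\ell}}$ (since $T_n\PAR{f_\theta h}\approx\PAR{2\pi}^{-1}T_n\PAR{f_\theta}T_n\PAR{h}$ for regular enough $h$, the $2\pi$ coming from the normalisation of $T_n$). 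I emphasize that one should work with $f^{-p}\prod g_\ell$ rather than $\prod f^{-1}g_\ell$ precisely because, although $1/f_\theta\sim\ABS{x}^{\al\PAR\theta}$ need not be integrable when $\al\PAR{\theta_0}\le-1$, the function $\prod_\ell w_{\theta,\ell}$ \emph{is} always integrable, so $T_n\PAR{1/f_\theta}$ never has to be introduced.

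For~(ii) I would first note that by assertion~3(a) of Theorem~\ref{ThRou} one has $f_\theta\PAR{x}>0$ for $x\ne0$, so each $w_{\theta,\ell}$ is well defined, and by Condition~1 and the quotient rule it is differentiable on $[-\pi,\pi]\backslash\BRA{0}$ with $w_\ell,\tfrac{\partial}{\partial x}w_\ell$ continuous on $\Theta^*\times[-\pi,\pi]\backslash\BRA{0}$. Dividing assertions~3(a), 3(c) (with $\be=\al$) and the extra bound on $\tfrac{\partial}{\partial x}g_{\theta,\ell}$ by the lower bound in~3(a) for $f_\theta$, and using~3(b), I would derive, for every $\de>0$ (running the hypotheses with auxiliary exponent $\de/4$), the uniform estimates
\[
\ABS{w_{\theta,\ell}\PAR{x}}\le C_{\de,\Theta^*}\ABS{x}^{-\de}\quad\text{and}\quad\ABS{\tfrac{\partial}{\partial x}w_{\theta,\ell}\PAR{x}}\le C_{\de,\Theta^*}\ABS{x}^{-1-\de},\qquad\theta\in\Theta^*,\ x\ne0.
\]
Since $\al$ is continuous and $\al\PAR{\theta_0}<1/2$, I would then fix $r$ small enough that $\sup_{\theta\in\Theta^*}\al\PAR\theta<1/2$; for $\de$ small the squares of $f_\theta$ and $g_{\theta,\ell}$ are then dominated by a constant multiple of $\ABS{x}^{-2\sup_{\Theta^*}\al-\de}$ and that of $w_{\theta,\ell}$ by a constant multiple of $\ABS{x}^{-2\de}$, all integrable on $[-\pi,\pi]$, so the three families lie in $\bL^{2}\PAR{[-\pi,\pi]}$ uniformly in $\theta$. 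Together with the regularity just recorded, this is exactly what Lemma~8 of \cite{judith2F} requires of $f=f_\theta$ and $g_\ell=g_{\theta,\ell}$, uniformly over $\Theta^*$.

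For~(iii), Lemma~8 then gives the stated limit at once. I expect the mechanism inside that lemma to be twofold. First it would make the heuristic of~(i) rigorous, i.e.\
\[
\lim_{n\to+\infty}\sup_{\theta\in\Theta^*}\ABS{\frac{1}{n}{\rm tr}\SBRA{\prod_{\ell=1}^{p}\PAR{T_n\PAR{f_\theta}}^{-1}T_n\PAR{g_{\theta,\ell}}}-\frac{1}{\PAR{2\pi}^{p}}\,\frac{1}{n}{\rm tr}\SBRA{\prod_{\ell=1}^{p}T_n\PAR{w_{\theta,\ell}}}}=0,
\]
the error being governed, through Lemma~\ref{NormS} applied to $f_\theta$ and $\ABS{\cdot}^{-\al\PAR\theta-\de}$, by the uniform $\bL^2$-bounds above (with $\de$ small compared with $1/2-\sup_{\Theta^*}\al$). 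Second, the remaining trace would be handled by Theorem~\ref{ThRou} applied with the constant symbol $\tfrac1{2\pi}$ in place of each $f_\ell$ (so $T_n\PAR{\tfrac1{2\pi}}$ is the identity) and $g_\ell=w_{\theta,\ell}$: Conditions~1--3 there hold with $\al\equiv0\in(-1,1)$, $\be\equiv0$, $p\PAR{\be-\al}=0<1$ thanks to the displayed bounds on $w_{\theta,\ell}$, whence $\frac1n{\rm tr}\SBRA{\prod_{\ell=1}^{p}T_n\PAR{w_{\theta,\ell}}}\to\PAR{2\pi}^{p-1}\int_{-\pi}^{\pi}\prod_{\ell=1}^{p}w_{\theta,\ell}\PAR{x}\,dx$ uniformly in $\theta$; combined with $\prod_\ell w_{\theta,\ell}=f_\theta^{-p}\prod_\ell g_{\theta,\ell}$ this is the claim. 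The hard part — and it is exactly the content of Lemma~8 of \cite{judith2F} — will be showing that the ``commutator'' errors $T_n\PAR{g_{\theta,\ell}}-\PAR{2\pi}^{-1}T_n\PAR{f_\theta}T_n\PAR{w_{\theta,\ell}}$, conjugated by $\PAR{T_n\PAR{f_\theta}}^{-1}$, are negligible in the normalised trace uniformly in $\theta$; this is precisely where the threshold $\al\PAR{\theta_0}<1/2$ (equivalently $f_\theta,g_{\theta,\ell}\in\bL^2$) and the smallness of $r$ (keeping $\sup_{\Theta^*}\al<1/2$ and the oscillation of $\al$ small, so the $\de$-cushions are uniform) are used.
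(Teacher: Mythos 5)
Your proposal takes essentially the same route as the paper: the paper establishes this theorem solely by invoking Lemma~8 of \cite{judith2F} (it is stated to be ``a simple consequence'' of that lemma, with no further argument given), and your reduction to that lemma, together with the verification that Conditions~1--2 yield the bounds $\ABS{g_{\theta,\ell}/f_\theta}\le C\ABS{x}^{-\de}$ and $\ABS{\partial_x(g_{\theta,\ell}/f_\theta)}\le C\ABS{x}^{-1-\de}$ uniformly on $\Theta^*$, is sound. Your reconstruction of the lemma's internal mechanism (the factorisation $T_n(g_{\theta,\ell})\approx(2\pi)^{-1}T_n(f_\theta)T_n(g_{\theta,\ell}/f_\theta)$ and the $\bL^2$ reading of the threshold $\al<1/2$) cannot be checked against this paper, which does not reproduce the proof of Lemma~8, but it is consistent with the statement and with the normalisation $T_n(1)=2\pi\,\mathrm{Id}_n$ used here.
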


\section{LAN property for a certain class of random processes} \label{s:LAN}
Let $X_n, n \in \mathbb{N} $ be a centered Gaussian stationary process with law $P_\theta$ parametrized by  ${\bf \theta}=(\theta_1,\dots,\theta_m)^{'} \in\Theta\subset \mathbb{R}^m$ and associated with the $2\pi$-periodic even spectral density $f_\theta$. Then, under $P_\theta$,
$$
\bE (X_n X_{n+k})=\frac{1}{2\pi} \int_{-\pi}^{\pi}\exp(ikx)f_\theta(x)dx=c_k(f_\theta).
$$
As usual,  for $\theta\ne \eta$, the set $\BRA{x\in [-\pi,\pi],\, f_\theta(x)=f_{\eta}(x)}$ is assumed to have positive Lebesgue measure.  This assumption is not needed to obtain the LAN property but  is a standard background assumption in statistics. Actually,  if this condition is not fulfilled, the model is not  identifiable, preventing any estimation issues.
\\

In practice, we observe the vector $\bX_n=(X_1,\dots,X_n)$, with $n\in\mathbb{N}\backslash\BRA{0}$, whose law is denoted by $P^n_\theta$. Under $P^n_\theta$, the covariance matrix of  $\bX_n$ is then the symmetric Toeplitz matrix 
$$
\frac{1}{2\pi}T_n\PAR{f_\theta}=\PAR{c_{k-j}(f_{\theta})}_{1\leq k,j \leq n}.
$$ 
and the Fisher information of the model is the matrix
$$
 I({\bf \theta})=\frac{1}{4\pi}\left(\int_{-\pi}^{\pi} \frac{\partial \log f_\theta(x)}{\partial \theta_k } \frac{\partial \log f_\theta(x)}{\partial \theta_j} dx \right)_{1\leq k,j \leq m}. 
$$

\bigskip

The LAN property of the model is proved under the following assumption.

\begin{ass} 
\label{Der}
\hfill

\begin{enumerate}[(\rm{A}.1)]
\item 
For any $x\in [-\pi,\pi]\backslash\BRA{0}$, the function $\theta\mapsto f_{\theta}(x)$ is three times continuously differentiable  on $\Theta$. In addition, for any $0\le \ell\le 3$ and $1\le k_1,\ldots,k_\ell\le m$, the partial derivative 
$$
\PAR{\theta,x}\rightarrow \frac{\partial^{\ell}}{\partial \theta_{k_1}\ldots\partial \theta_{k_\ell}} f_\theta(x)
$$
 is continuous  on $\Theta\times [-\pi,\pi]\backslash\BRA{0}$, continuously differentiable with respect to $x$ on $[-\pi,\pi]\backslash\BRA{0} $ and its partial derivative 
$$
\PAR{\theta,x}\rightarrow \frac{\partial^{\ell+1}}{\partial x\partial \theta_{k_1}\ldots\partial \theta_{k_\ell}} f_\theta(x)
$$
is continuous  on $\Theta\times [-\pi,\pi]\backslash\BRA{0}$.

\item There exists a continuous function $\al\,:\Theta \rightarrow (-\infty,1) $ 
such that for any $\de>0$ and any compact set $\Theta^*\subset \Theta$, the following conditions hold for every $(\theta,x)\in  \Theta^*\times [-\pi,\pi]\backslash\BRA{0}$. 
 \begin{enumerate}
\item  $\displaystyle 
c_{_{1,\de,\Theta^*}}\ABS{x}^{-\al(\theta)+\de}\le  f_\theta(x)\le c_{_{2,\de,\Theta^*}} \ABS{x}^{-\al(\theta)-\de}$
\item $\displaystyle 
\ABS{\frac{\partial}{\partial x} f_\theta(x)}\le c_{_{2,\de,\Theta^*}} \ABS{x}^{-\al(\theta)-1-\de}
$
\item for any $\ell \in\BRA{1,2,3}$, and any $k\in\BRA{1,\ldots,m}^\ell$, 
$$\displaystyle 
\ABS{\frac{\partial^\ell}{\partial \theta_{k_1}\ldots\partial \theta_{k_\ell}} f_\theta(x)}\le c_{_{2,\de,\Theta^*}} \ABS{x}^{-\al(\theta)-\de}.
$$
\end{enumerate}
with  $c_{_{i,\de,\Theta^*}}$  some finite positive constants which only depend on $\de$ and $\Theta^*$.
\end{enumerate}
\end{ass}

This assumption implies that $1/f_{\theta}$ is well-defined on $[-\pi,\pi]\backslash\BRA{0}$ and corresponds to Assumptions (A1), (A2) and (A4) in \cite{judith2}, except that we impose some smoothness property on the derivative of order three.  
This 
 assumption, as noted in \cite{judith2}, is an extension and a  reformulation of Dahlhaus's ones in \cite{dah,dahCor}. \\



If for the true value $\theta_0$ of the parameter, $\al\PAR{\theta_0}\in (-1,1)$,  the LAN property (see Theorem \ref{LANdiscrete}) holds. Nevertheless, if $\al\PAR{\theta_0}\le -1$,  the LAN property is established under the following additional  assumption (which allows to apply Theorem~\ref{ThRou2}).

\begin{ass}
\label{AssNI}  
Let $\Theta^*=B(\theta_0,r)\subset \Theta$. 
 For any $\de>0$, for any $\ell \in\BRA{1,2,3}$ and  $k\in\BRA{1,\ldots,m}^\ell$,
$$\displaystyle 
\ABS{\frac{\partial^{\ell+1}}{\partial x\partial \theta_{k_1}\ldots\partial \theta_{k_\ell}} f_\theta(x)}\le c_{_{2,\de,\Theta^*}} \ABS{x}^{-\al(\theta)-1-\de}
$$
with $\al$ and $c_{_{2,\de,\Theta^*}}$ given in Assumption \ref{Der}.
\end{ass}

\bigskip

Under Assumption \ref{Der}, the covariance matrix $T_n(\frac 1 {2\pi} f_\theta)$ is invertible (for each $\theta$)  and then, we compare the distribution of the model under $P_\theta$ and $P_{\eta}$ using the following proposition.  
\begin{prop}
\label{LogLikehood}
 For any symmetric positive definite matrix $\Gamma$ on $\mathbb{R}^n$ ($n\in \mathbb{N}\backslash\{0\}$),  $P_\Gamma$ denotes the distribution of a centered Gaussian vector with covariance $\Gamma$.
 Then, for any covariance matrices $\Gamma_1$ and $\Gamma_2$, 
$$ 
2 \log \frac{dP_{\Gamma_1}}{dP_{\Gamma_2}}(x)=\langle x,(\Gamma_2^{-1}-\Gamma_1^{-1})x \rangle+\log \det( \Gamma_1^{-1} \Gamma_2) ,
$$ where $\langle \,\cdot \, \rangle $ denotes the usual Hermitian product in $\mathbb{C}^n$.
\end{prop}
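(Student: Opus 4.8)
The plan is to observe that, since $\Gamma_1$ and $\Gamma_2$ are symmetric positive definite, both $P_{\Gamma_1}$ and $P_{\Gamma_2}$ are nondegenerate centered Gaussian measures on $\mathbb{R}^n$, hence mutually absolutely continuous with respect to Lebesgue measure $\lambda$, with strictly positive densities. Consequently $dP_{\Gamma_1}/dP_{\Gamma_2}$ is simply the pointwise quotient of the two Lebesgue densities, and the whole statement reduces to an explicit finite-dimensional computation.

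Concretely, I would first recall that the density of $P_\Gamma$ with respect to $\lambda$ is
$$
p_\Gamma(x)=(2\pi)^{-n/2}\,(\det\Gamma)^{-1/2}\,\exp\PAR{-\tfrac12\langle x,\Gamma^{-1}x\rangle}.
$$
Second, since $p_{\Gamma_2}>0$ everywhere, the chain rule gives $\frac{dP_{\Gamma_1}}{dP_{\Gamma_2}}(x)=p_{\Gamma_1}(x)/p_{\Gamma_2}(x)$; the factors $(2\pi)^{-n/2}$ cancel, leaving
$$
\frac{dP_{\Gamma_1}}{dP_{\Gamma_2}}(x)=\PAR{\frac{\det\Gamma_2}{\det\Gamma_1}}^{1/2}\exp\PAR{-\tfrac12\langle x,\Gamma_1^{-1}x\rangle+\tfrac12\langle x,\Gamma_2^{-1}x\rangle}.
$$
Third, taking logarithms and multiplying by $2$ yields
$$
2\log\frac{dP_{\Gamma_1}}{dP_{\Gamma_2}}(x)=\langle x,(\Gamma_2^{-1}-\Gamma_1^{-1})x\rangle+\log\frac{\det\Gamma_2}{\det\Gamma_1},
$$
and finally rewriting $\log(\det\Gamma_2/\det\Gamma_1)=\log\PAR{\det\Gamma_1^{-1}\cdot\det\Gamma_2}=\log\det(\Gamma_1^{-1}\Gamma_2)$ by multiplicativity of the determinant gives exactly the claimed identity.

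There is essentially no obstacle in this proof: the only points requiring a word are the well-definedness of the Gaussian densities (ensured by positive-definiteness of the $\Gamma_i$) and the fact that the Radon--Nikodym derivative between the two measures is the ratio of their densities against a common dominating measure, which is immediate here since $p_{\Gamma_2}$ never vanishes. The identity is exact and purely algebraic, involving no approximation or asymptotics; it will later be applied with $\Gamma_1=T_n(\tfrac1{2\pi}f_{\theta})$ and $\Gamma_2=T_n(\tfrac1{2\pi}f_{\eta})$ to express the log-likelihood ratio of the observation model.
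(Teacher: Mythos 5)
Your proof is correct: the paper states Proposition \ref{LogLikehood} without proof, treating it as the standard identity for the likelihood ratio of two nondegenerate centered Gaussians, and your derivation (ratio of Lebesgue densities, cancellation of the normalizing constant, logarithm, and multiplicativity of the determinant) is exactly the canonical argument it implicitly relies on. Nothing is missing, and your closing remark about how the proposition is applied with $\Gamma_i = T_n(\tfrac{1}{2\pi}f_\theta)$ matches the paper's use in the proof of Theorem \ref{LANdiscrete}.
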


The following theorem states LAN property for the observation model.
\begin{thm}[LAN property] \label{LANdiscrete}
  Let $\theta_0 $ be in the interior of $\Theta$. Assume that Assumption~\ref{Der} is fulfilled. If $\al\PAR{\theta_0}\le -1$, assume also that Assumption~\ref{AssNI} is fulfilled. Then under  $P_{\theta_0}^n$, for $t\in \mathbb{R}^m$, 
  we get 
  $$ 
  \log \frac{d P^n_{{\theta_0}+{t/\sqrt{n}}}}{d P^n_{\theta_0}}=\langle t, Z_n\rangle -\frac{1}{2} t^*I({\theta_0})t + \psi_{\theta_0}(t,n)
  $$
where $Z_n$ does not depend on $t$ and converges in distribution,  under $P^n_{\theta_0}$, to a centered Gaussian vector with covariance matrix $I({\theta_0}),$ while $\psi_{\theta_0}(\cdot,n)$ converges uniformly on each compact to $0$  $P_{\theta_0}^n$-almost surely  when $n \rightarrow +\infty$. 
\end{thm}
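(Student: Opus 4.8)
The plan is to apply Proposition~\ref{LogLikehood} with $\Gamma_1 = T_n(\tfrac{1}{2\pi}f_{\theta_0+t/\sqrt n})$ and $\Gamma_2 = T_n(\tfrac{1}{2\pi}f_{\theta_0})$, and then perform a Taylor expansion in $t$ around $\theta_0$ of the two resulting terms, the quadratic form $\langle \bX_n, (\Gamma_2^{-1}-\Gamma_1^{-1})\bX_n\rangle$ and the log-determinant $\log\det(\Gamma_1^{-1}\Gamma_2)$. Writing $\theta_t = \theta_0 + t/\sqrt n$ and expanding $T_n(f_{\theta_t})^{-1}$ to third order in $1/\sqrt n$ (using Assumption~\ref{Der}(A.1) to differentiate under the Toeplitz construction, since $T_n$ is linear in its symbol so $\partial_{\theta_k} T_n(f_\theta) = T_n(\partial_{\theta_k} f_\theta)$), one isolates: a linear-in-$t$ term, which will be $\langle t, Z_n\rangle$ with $Z_n$ a quadratic functional of $\bX_n$ minus its mean; a quadratic-in-$t$ term whose deterministic part converges to $-\tfrac12 t^* I(\theta_0) t$ and whose fluctuating part vanishes; and a remainder $\psi_{\theta_0}(t,n)$ collecting all the higher-order terms (the third-order Taylor terms carrying a factor $n^{-3/2}$, and the second-order integral remainder).

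The heart of the argument is showing that every "error" piece is negligible, and this is precisely where Theorems~\ref{ThRou} and \ref{ThRou2} enter. Each term appearing in the expansion is (a linear combination of) traces of products of the form $\prod_{\ell=1}^p T_n(f_{\theta_0})^{-1} T_n(g_{\theta_0,\ell})$, where each $g_{\theta,\ell}$ is $f_\theta$ itself, a $\theta$-derivative $\partial^\ell_{\theta} f_\theta$, or (for the convergence of the quadratic form as an $\bL^2$/a.s. statement) enters through $\bX_n$'s covariance $T_n(f_{\theta_0})$. By Assumption~\ref{Der}(A.2), every such $g$ satisfies the size bound with exponent $\al(\theta_0)+\de$, hence $\beta=\al$ up to $\de$, so the gap $\beta-\al$ can be made as small as we like; since the number $p$ of factors arising from a third-order expansion is bounded (at most $3$ or $4$), the condition $p(\beta(\theta)-\al(\theta))<1$ holds when $\al(\theta_0)\in(-1,1)$, and we may invoke Theorem~\ref{ThRou} to replace $\tfrac1n\mathrm{tr}[\cdots]$ by the corresponding spectral integral $\tfrac{1}{2\pi}\int \prod f^{-1}g\,dx$ uniformly in $\theta$ over a small ball $\Theta^*$ around $\theta_0$. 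When $\al(\theta_0)\le -1$ one instead uses Assumption~\ref{AssNI} to control the $x$-derivatives of the $\theta$-derivatives and applies Theorem~\ref{ThRou2} (valid for $r$ small enough), which plays exactly the same role. Convergence in distribution of $Z_n$ to $\mathcal N(0, I(\theta_0))$ then follows from the standard central limit theorem for quadratic forms of Gaussian stationary sequences (a CLT of Fox--Taqqu / Giraitis--Surgailis type), whose variance is computed via the same trace asymptotics: the covariance of the linear term converges to $\tfrac{1}{4\pi}\int \partial_{\theta_k}\log f_{\theta_0}\,\partial_{\theta_j}\log f_{\theta_0}\,dx = I(\theta_0)$, and one checks the centering is $o(1)$ using that $\tfrac1n\mathrm{tr}[T_n(f_{\theta_0})^{-1}T_n(\partial_{\theta_k}f_{\theta_0})] \to \tfrac{1}{2\pi}\int \partial_{\theta_k}\log f_{\theta_0}\,dx$.

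For the remainder, the key point is that the Taylor remainder in $\theta$ is evaluated at an intermediate point $\theta_n^* = \theta_0 + \vartheta t/\sqrt n$ that still lies in the small ball $\Theta^*$ for $n$ large, so Assumption~\ref{Der} (uniform in $\Theta^*$) applies with $\theta_n^*$ in place of $\theta_0$; the third-order trace terms are then $O(1)$ after dividing by $n$ by Theorem~\ref{ThRou} (or \ref{ThRou2}), hence the whole third-order contribution is $O(n^{-1/2})$ uniformly for $t$ in a compact set. The deterministic part of $\psi$ is handled by this uniform boundedness; the stochastic part (the difference between $\langle \bX_n, M_n \bX_n\rangle$ and its expectation, for $M_n$ one of these matrix products) is controlled by computing its variance, again a trace of a product of Toeplitz matrices satisfying the hypotheses, and showing it is $o(1)$, followed by a Borel--Cantelli argument along a suitable subsequence plus a continuity/monotonicity argument in $t$ to upgrade to uniform-on-compacts $P_{\theta_0}^n$-a.s. convergence. \textbf{The main obstacle} is the bookkeeping that guarantees, term by term in the third-order Taylor expansion, that the number of Toeplitz factors never exceeds the threshold allowed by $p(\beta-\al)<1$ and that the intermediate-point evaluation stays within the domain of validity of the uniform bounds; once the expansion is organized so that each piece is a bounded trace-type quantity, the analytic content is entirely supplied by Theorems~\ref{ThRou} and \ref{ThRou2} together with the Gaussian quadratic-form CLT.
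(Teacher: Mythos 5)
Your skeleton coincides with the paper's: apply Proposition~\ref{LogLikehood} to write the log-likelihood ratio as a smooth function $F_n(\theta_0+t/\sqrt n)$ of the parameter, Taylor-expand to third order, set $Z_n=\nabla F_n(\theta_0)/\sqrt n$, show $\nabla^2F_n(\theta_0)/n\to-I(\theta_0)$, and kill the third-order remainder uniformly over a small ball $B(\theta_0,r)$, with Theorems~\ref{ThRou} and~\ref{ThRou2} supplying all the trace asymptotics and the case split at $\al(\theta_0)\le-1$ handled exactly as in the paper. Two sub-steps are executed differently. First, for the CLT of $Z_n$ you quote an external Fox--Taqqu/Giraitis--Surgailis quadratic-form CLT; the paper instead diagonalizes the Gaussian quadratic form (Lemma~\ref{VGprop}) and applies Lindeberg (Lemma~\ref{linde}), checking negligibility of the largest eigenvalue via the spectral-norm bound of Lemma~\ref{NormS} --- a self-contained route that remains valid over the whole range $\al(\theta_0)\in(-\infty,1)$, where off-the-shelf quadratic-form CLTs are not directly quotable. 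Second, and this is the one place your plan would actually stall: to control $\sup_{\theta\in B(\theta_0,r)}\ABS{\partial^3F_n(\theta)}$ you propose pointwise variance bounds plus Borel--Cantelli plus continuity in $t$. A pointwise-in-$\theta$ variance estimate does not bound a supremum over the ball (the Taylor intermediate point is moreover random), and no chaining or equicontinuity input is provided. The paper avoids this entirely: each stochastic term $H_{n,i}(\theta)$, $i=1,2,3$, is dominated \emph{deterministically and uniformly in} $\theta\in B(\theta_0,r)$ by a quantity of the form $K\,n^{c\veps}\,\NRM{T_n(f_{\theta_0})^{-1/2}\mathbf{x}_n}^2$, using Lemma~\ref{NormS} whose constants depend only on the ball, and then the strong law of large numbers is applied to the single random variable $\NRM{T_n(f_{\theta_0})^{-1/2}\mathbf{x}_n}^2/n$. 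You would need to substitute a uniform domination of this kind for your pointwise argument; with that replacement the rest of your proposal (the $p(\beta-\al)<1$ bookkeeping, the $\de$-trick in the exponents, the role of Assumption~\ref{AssNI}) matches the paper's proof.
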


\begin{proof} Let $K\subset\dR^m$ be a compact set and consider $r>0$ such that  
$$
\Theta^*=B(\theta_0,r) \subset\Theta
$$ 
where $B(u,r)$ is the Euclidean closed ball of $\dR^m$ centered at $u$ with radius $r$. 
Then, we can choose  $n_0$, such that for any integer $n\ge n_0$ and any $t\in K$, $\theta_0+t/\sqrt{n}\in \Theta^*$. 

Let us now consider $n\ge n_0$ and observe that for any $t\in K$, $  P^n_{{\theta_0}}$ and $P^n_{\theta_0+t/\sqrt{n}}$ are well-defined. Moreover, using Proposition \ref{LogLikehood}, for any $t\in K$, we get
$$
\log \frac{dP^n_{{\theta_0}+{t/\sqrt{n}}}}{dP^n_{\theta_0}}({\bf{x}}_n)  = F_n\PAR{\theta_0+\frac{t}{\sqrt{n}}}
$$
where  ${\bf{x}}_n=(x_1,\ldots,x_n)\in\dR^n$ and for $\theta\in B\PAR{\theta_0,r}=\Theta^*,$
$$
F_n(\theta)=\pi <{\bf{x}}_n,[{T_n(f_{\theta_0})}^{-1}-{T_n(f_{\theta})}^{-1}] {\bf{x}}_n> 
+\frac{1}{2} \log \det [T_n(f_{\theta})^{-1}T_n(f_{\theta_0})] .
$$
  By Assumption \ref{Der}, $F_n$ is  three times continuously differentiable on $B(\theta_0,r)$. Hence, for any $t\in K$, since $F_n(\theta_0)=0$,
$$
\ABS{F_n\PAR{\theta_0+\frac{t}{\sqrt{n}}}-\frac{\langle t,\nabla F_n(\theta_0)\rangle }{\sqrt{n}} -\frac{t^* \nabla^2F_n(\theta_0) t}{2n}}\le \frac{M_K^3}{6n^{3/2}}
\max_{1\le j,k,l\le m}\sup_{\theta\in B(\theta_0,r)}
\ABS{\frac{\partial^3 F_n}{\partial \theta_j\partial \theta_k\partial \theta_l}(\theta)}
$$  
where $\nabla F_n(\theta_0)$ is the gradient of $F_n$ 	at $\theta_0$, $\nabla^2 F_n(\theta_0)$ its Hessian matrix at $\theta_0$ and  $M_K=\max_{s\in K}\NRM{s}$.

 Hence, setting $Z_n=\nabla F_n(\theta_0)/\sqrt{n}$ (which does not depend on $t\in K$) and applying Equation \eqref{MN2},  we get  
$$\forall t\in K,\, 
F_n\PAR{\theta_0+\frac{t}{\sqrt{n}}}= \langle t, Z_n \rangle-\frac{1}{2} t^* I(\theta_0) t +\psi_{\theta_0}(t,n)
$$
with
 $$
\sup_{s\in K} \ABS{\psi_{\theta_0}(s,n)}\le \frac{M_K^2}{2}\NRM{\frac{\nabla^2 F_n(\theta_0)}{n}+I(\theta_0)}_{2,m}+\frac{M_K^3}{6n^{3/2}}\max_{1\le j,k,l\le m}\sup_{\theta\in B(\theta_0,r)}
\ABS{\frac{\partial^3 F_n}{\partial \theta_j\partial \theta_k\partial \theta_l}(\theta)}.
$$

\bigskip

The conclusion follows from the three following lemmas, whose proofs are postponed to the Appendix for  sake of clearness. The first lemma deals with the behavior of $Z_n$. 
\begin{lem}
\label{Zncv} 
Under $P^n_{\theta_0}$, $Z_n$ converges in distribution, as $n\to +\infty$, to a centered Gaussian random vector whose covariance matrix is the Fisher information $I(\theta_0)$.\\
\end{lem} 

Let us now state the asymptotic of $\nabla^2F_n(\theta_0)$.
\begin{lem}
\label{F2cv} Under $P^n_{\theta_0}$, 
$\nabla^2 F_n(\theta_0)/n$ converges almost surely to $-I(\theta_0)$, as $n\to +\infty$. Hence, 
$$
\NRM{\frac{\nabla^2 F_n(\theta_0)}{n}+I(\theta_0)}_{2,m}
$$ 
converges almost surely to $0$ as $n\to +\infty$. \\
\end{lem}

The next lemma  deals with the behavior of the partial derivative of $F_n$ of order three. 
\begin{lem}
\label{F3cv} 
For $r$ small enough, for any $1\le j,k,l\le m$,  under $P^n_{\theta_0}$
$$
 \frac{1}{n^{3/2}}\sup_{B(\theta_0,r)} {\ABS{\frac{\partial^3 F_n}{\partial  \theta_j \partial \theta_k\partial \theta_l}}}
$$
converges almost surely to $0$.
\end{lem}
Conbining Lemmas \ref{F2cv} and \ref{F3cv}, we get
$$
\lim_{n\to +\infty} \sup_{s\in K} \ABS{\psi_{\theta_0}(s,n)}=0 \quad \textrm{$P^n_{\theta_0}$-almost surely,}
$$
 which concludes the proof. 
\end{proof}

\section{Application to Fractional Gaussian noises and ARFIMA processes}\label{s:appli}
Here we consider two particular cases where the LAN property can be proved.
\begin{description}
\item[Fractional Gaussian noises]
\end{description}
  Let $\PAR{B_H\PAR{t}}_{t \geq 0}$ be a  fractional Brownian motion (see \cite{Kolmo40,Mand68}) with Hurst index $H\in (0,1)$. In other words, $B_H$ is a centered Gaussian random process whose covariance function is given by 
\begin{equation}
\label{FBMcov}
\bE \PAR{B_H\PAR{t} B_H\PAR{s}}=\frac{\si^2}{2} \left[|t|^{2H}-2|t-s|^{2 H}+|s|^{2 H }\right].
\end{equation}
The parameter $\si^2$ corresponds to the variance of $B_H(1)$. 
Let us now consider the centered stationary Gaussian sequence $\PAR{X_n}_{n\ge 1}$, called  the fractional Gaussian noise of index $H$, defined by 
$$
 {\rm for} \: \: n\ge 1,\quad  X_n=B_H(n)-B_H(n-1).$$
 The law of $\PAR{X_n}_{n\ge 1}$ is parametrized by $\theta=\PAR{\si^2,H}\in (0,+\infty)\times (0,1)$. 
According to \cite{Taqqu94},  its spectral density $f_{\si^2,H}$ is given by
\begin{equation}
\label{FBMds}
f_{\si^2,H}(x)=\frac{\si^2 |\textup{e}^{ix}-1|^2}{C_2^2(H)}\sum_{k \in \mathbb{Z}} \frac{1}{|x+2k\pi|^{2 H+1}}, \quad x\in[-\pi,\pi]\backslash\BRA{0},
\end{equation}
where
$$ C_2^2(\alpha)=\frac{\pi}{\alpha \Gamma(2\alpha) \sin(\alpha \pi)}.
$$ 
Then, the model satisfies Assumption \ref{Der} with $\al(\theta)=2H-1$. Since the range of $\al$ is $(-1,1)$, the Assumption \ref{AssNI} is not needed in this example. \vskip .1in
 Next proposition establishes the LAN property when the observation are modeled by 
$$
{\bf{B}}_n=\BRA{B_H(1),\ldots,B_H(n)}
$$
with $B_H$ the fractional Brownian motion whose covariance function is given by \eqref{FBMcov}.  This model is not a stationary one but its log-likelihood can be linked to those of the fractional Gaussian noise 
$$
{\bf{X}}_n=\BRA{B_H(1),B_H(2)-B_H(1),\ldots,B_H(n)-B_H(n-1)},
$$ 
which fulfills Assumption \ref{Der}. The law of ${\bf{B}}_n$ is parametrized by $\PAR{\si^2,H} \in (0,+\infty)\times  (0,1)=\Theta$ and denoted by  $Q^n_{\si^2,H}$. 

\begin{prop} Let $I$ be the Fisher information of the fractional Gaussian noise ${\bf{X}}_n$, that is the Fisher information associated with the spectral density $f_{\si^2,H}$ defined by \eqref{FBMds}. Then, under  $Q^n_{\si_0^2,H_0}$, for $t\in \mathbb{R}^2$ and $n$ large enough
$$ 
\log \frac{d Q_{(\si_0^2,H_0)+{t/\sqrt{n}}}^n}
{d Q_{(\si_0^2,H_0)}^n}
=
\langle t, Z_n\rangle -\frac{1}{2} t^*I(\si_0^2,H_0)t + \psi_{\si_0^2,H_0}(t,n)
  $$
where $Z_n$ does not depend on $t$ and converges in distribution,  under $Q^n_{(\si_0^2,H_0)}$, to a centered Gaussian vector with covariance matrix $I(\si_0^2,H_0)$, while $\psi_{\si_0^2,H_0}(\cdot,n)$ converges uniformly on each compact to $0$  $Q_{(\si_0^2,H_0)}^n$-almost surely  when $n \rightarrow +\infty$. 

\end{prop}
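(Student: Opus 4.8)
The plan is to reduce the statement to Theorem~\ref{LANdiscrete} applied to the fractional Gaussian noise, exploiting the fact that the observation vectors $\mathbf{B}_n=\{B_H(1),\dots,B_H(n)\}$ and $\mathbf{X}_n=\{B_H(1),B_H(2)-B_H(1),\dots,B_H(n)-B_H(n-1)\}$ differ only by a fixed, $\theta$-free linear bijection. Writing $A_n$ for the $n\times n$ lower bidiagonal matrix with $1$ on the diagonal and $-1$ on the first subdiagonal, one has $\mathbf{X}_n=A_n\mathbf{B}_n$ (recall $X_1=B_H(1)=B_H(1)-B_H(0)$), and $A_n$ is invertible with $\det A_n=1$. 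Hence, if $q^n_{\si^2,H}$ and $p^n_{\si^2,H}$ denote the Lebesgue densities of $\mathbf{B}_n$ under $Q^n_{\si^2,H}$ and of $\mathbf{X}_n$ under $P^n_{\si^2,H}$, the change of variables formula gives $q^n_{\si^2,H}(b)=p^n_{\si^2,H}(A_nb)$ since the Jacobian factor is $|\det A_n|=1$; consequently
$$
\log\frac{dQ^n_{(\si_0^2,H_0)+t/\sqrt n}}{dQ^n_{(\si_0^2,H_0)}}(\mathbf{B}_n)
=\log\frac{dP^n_{(\si_0^2,H_0)+t/\sqrt n}}{dP^n_{(\si_0^2,H_0)}}(\mathbf{X}_n),
$$
and under $Q^n_{(\si_0^2,H_0)}$ the vector $\mathbf{X}_n=A_n\mathbf{B}_n$ is exactly distributed according to $P^n_{(\si_0^2,H_0)}$. (Alternatively one could feed $\Gamma_i=A_n^{-1}T_n(f_{\theta_i}/2\pi)(A_n^{-1})^{*}$ into Proposition~\ref{LogLikehood}, but the determinant-one change of variables avoids the matrix algebra.)

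Next I would invoke the verification, already recorded in the text, that the spectral density $f_{\si^2,H}$ of~\eqref{FBMds} satisfies Assumption~\ref{Der} with $\al(\theta)=2H-1$; since $\al$ has range $(-1,1)$, Assumption~\ref{AssNI} is vacuous, so Theorem~\ref{LANdiscrete} applies to the stationary fGn model on $\Theta=(0,+\infty)\times(0,1)$ with Fisher information $I(\si_0^2,H_0)$ given by the integral formula built from $f_{\si^2,H}$. For $n$ large enough this provides, under $P^n_{(\si_0^2,H_0)}$, an expansion
$$
\log\frac{dP^n_{(\si_0^2,H_0)+t/\sqrt n}}{dP^n_{(\si_0^2,H_0)}}
=\langle t,\widetilde Z_n\rangle-\tfrac12\,t^{*}I(\si_0^2,H_0)\,t+\widetilde\psi_{\si_0^2,H_0}(t,n),
$$
with $\widetilde Z_n$ (not depending on $t$) converging in law to $\mathcal N\!\big(0,I(\si_0^2,H_0)\big)$ and $\sup_{t\in K}|\widetilde\psi_{\si_0^2,H_0}(t,n)|\to0$ almost surely for every compact $K$.

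Finally I would transport the expansion back to the fBm model. Setting $Z_n:=\widetilde Z_n(\mathbf{X}_n)=\widetilde Z_n(A_n\mathbf{B}_n)$ and letting $\psi_{\si_0^2,H_0}(t,n)$ be $\widetilde\psi_{\si_0^2,H_0}(t,n)$ evaluated along $\mathbf{X}_n$, the identity of log-likelihood ratios above shows that the claimed expansion for $\log\big(dQ^n_{\cdot+t/\sqrt n}/dQ^n_{\cdot}\big)$ holds verbatim. Since $\mathbf{X}_n$ has law $P^n_{(\si_0^2,H_0)}$ under $Q^n_{(\si_0^2,H_0)}$, the convergence $Z_n\Rightarrow\mathcal N(0,I(\si_0^2,H_0))$ under $Q^n_{(\si_0^2,H_0)}$ is immediate, and working on the single probability space carrying $(B_H(t))_{t\ge0}$ (so that $\mathbf{B}_n$ and $\mathbf{X}_n$ are initial segments of one realization) the almost sure uniform convergence of $\psi_{\si_0^2,H_0}(\cdot,n)$ to $0$ on compacts follows from the corresponding statement for $\widetilde\psi$. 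I expect the only slightly delicate point to be bookkeeping, namely checking that the Fisher information attached to the fBm model is literally the one associated with $f_{\si^2,H}$; this is automatic because Fisher information is invariant under the $\theta$-independent bijection $A_n$, so no new asymptotic analysis of Toeplitz forms is required beyond what Theorem~\ref{LANdiscrete} already supplies.
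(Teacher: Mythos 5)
Your proposal is correct and follows essentially the same route as the paper: identify the log-likelihood ratio of the fBm observations with that of the increment (fGn) vector via the determinant-one differencing bijection, note that under $Q^n_{(\si_0^2,H_0)}$ the increments have law $P^n_{(\si_0^2,H_0)}$, and then invoke Theorem~\ref{LANdiscrete} for the fGn model (Assumption~\ref{AssNI} being unnecessary since $\al(\theta)=2H-1\in(-1,1)$). The paper's proof is just a terser version of the same argument, leaving the Jacobian computation implicit.
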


\begin{proof} Let $\theta_0=\PAR{\si_0^2,H_0}$. As previously, $P_{\theta}^n$ denotes the law of ${\bf{X}}_n$. 
Observe that 
$$ \log \frac{d Q_{\theta_0+{t/\sqrt{n}}}^n}{d Q_{\theta_0}^n} ({\bf b}_n)
=\log \frac{d P_{\theta_0+{t/\sqrt{n}}}^n}{d P_{\theta_0}^n} ({\bf x}_n)$$
where ${\bf b}_n=(b_1,\dots,b_n)^{'}$ and ${\bf x}_n=(b_1,b_2-b_1,\dots,b_n-b_{n-1})^{'}$. Since under $Q^n_{\si_0^2,H_0}$, the law of ${\bf x}_n$ is $P_{\theta_0}^n$, the conclusion follows from Theorem \ref{LANdiscrete}. 
\end{proof}


\begin{description}
\item[ARFIMA processes]
\end{description} ARFIMA processes have been introduced in \cite{GJ80,Hosking81}.  We also refer to  \cite{Beran94} for general properties of ARFIMA$(p,d,q)$. 

Let $p,q\in \dN$. Then, a stationary ARFIMA process $\PAR{X_n}_n$ is parametrized by $\theta=\PAR{\si^2,d,\Phi_1,\ldots,\Phi_p,\Psi_1,\ldots,\Psi_q}$ where $d\in (-\infty,1)$ is the order of differentiability and  the polynoms 
$$
\Phi(X)=1+\sum_{j=1}^p \Phi_j X^j\quad \textrm{ and }\quad \Psi(X)=1+\sum_{j=1}^q \Psi_j X^j
$$ 
 have no zeros in the unit circle and no zeros in common. Then,  its spectral density is given by 
 $$
 f_{\theta}(x)=\si^2\ABS{\textup{e}^{ix}-1}^{-2d}\ABS{\frac{\Psi\PAR{\textup{e}^{ix}}}{\Phi\PAR{\textup{e}^{ix}}}}^2. 
 $$
 Then Assumptions \ref{Der} and \ref{AssNI} are fulfilled with $\al\PAR{\theta}=2d$.  
 Theorem~\ref{LANdiscrete} 
 also 
 implies LAN property for this model. 
\appendix

\section{Appendix} \label{s:appen}
\subsection{Proof of Lemma \ref{Zncv}} \label{s:proof}

 In this appendix, for $\ell\in\BRA{1,2,3}$ and $k\in \BRA{1,\ldots,m}^\ell$, $\partial_{k}^\ell f_{\theta}$ denotes the partial derivative of $\PAR{\theta,x}\mapsto f_\theta(x)$ with respect to $\PAR{\theta_{k_1},\ldots,\theta_{k_\ell}}$, that is 
\begin{equation}
\label{derpart}
\partial_{k}^\ell f_{\theta}(x)=\frac{\partial^\ell f_{\theta}}{\partial \theta_{k_1}\cdots\partial \theta_{k_\ell}}(x). 
\end{equation}

By definition of $F_n$, for any $\theta\in B(\theta_0,r)$, and any integer $1\le k\le m$,
\begin{eqnarray}
\frac{\partial F_n}{\partial \theta_k}(\theta)&= &\pi \langle{\bf{x}}_n,T_n(f_{\theta})^{-1} 
T_n\PAR{\partial_k f_{\theta}}
T_n(f_{\theta})^{-1}{\bf{x}}_n\rangle 
-\frac{1}{2} {\rm{tr}}\PAR{
T_n\PAR{\partial_k f_{\theta}}
T_n(f_{\theta})^{-1}}. \label{Fn'}
\end{eqnarray}


Let us fix $u\in\dR^m$ and study the asymptotic behavior, under $P_{\theta_0}^n$, of $\langle u,\nabla F_n(\theta_0)\rangle$, that is by \eqref{Fn'} of
$$
\langle u,\nabla F_n(\theta_0)\rangle = \pi 
\langle {\bf{x}}_n,T_n(f_{\theta_0})^{-1} T_n(g_{\theta_0}^u) T_n(f_{\theta_0})^{-1}{\bf{x}}_n\rangle -\frac{1}{2} {\rm{tr}}\PAR{T_n(g_{\theta_0}^u)  T_n(f_{\theta_0})^{-1}}
$$
with 
$$
g_{\theta}^u=\sum_{k=1}^m u_k \,\partial_k f_{\theta}.
$$

To achieve this goal we use the following result on Gaussian random field.
\begin{lem}
\label{VGprop}
Assume that $Y=(Y_1,\ldots,Y_n)'$ is a centered Gaussian random vector with covariance matrix $\Ga$ and consider $A$ a real symmetric matrix of order $n$. Then, 
$$
\langle Y,A Y\rangle\stackrel{(d)}{=}\sum_{j=1}^n \la_{j,n} \chi_{j,n}
$$
where $\stackrel{(d)}{=}$ stands for equality in distribution, $\PAR{\la_{j,n}}_{1\le j\le n} $ are the eigenvalues of the real symmetric matrix $\Ga^{1/2}A\Ga^{1/2}$  and $\PAR{\chi_{j,n}}_{j,n}$ are i.i.d. random variables with distribution $\chi^2(1)$. Moreover,
$$
\bE\PAR{\langle Y,A Y\rangle }={\rm{tr}}\PAR{A \Ga}={\rm{tr}}\PAR{\Ga A}\quad \textrm{ and }\quad 
{\rm{Var}} \PAR{\langle Y,A Y\rangle }=2\sum_{j=1}^n \la_{j,n}^2=2{\rm{tr}}\PAR{\PAR{A\Ga}^2}.
$$
\end{lem}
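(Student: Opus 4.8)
The statement to prove is Lemma~\ref{VGprop}, a standard fact about quadratic forms of Gaussian vectors. Here is how I would organize the proof.

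\textbf{Plan.} The key observation is that the law of $\langle Y, AY\rangle$ is invariant under replacing $Y$ by $\Ga^{1/2} G$, where $G=(G_1,\dots,G_n)'$ is a standard Gaussian vector: indeed $\Ga^{1/2} G$ has covariance $\Ga^{1/2}\Ga^{1/2}=\Ga$, hence the same law as $Y$. Thus $\langle Y, AY\rangle \stackrel{(d)}{=} \langle G, \Ga^{1/2} A \Ga^{1/2} G\rangle$. Now $M:=\Ga^{1/2} A \Ga^{1/2}$ is real symmetric (since $A$ is symmetric and $\Ga^{1/2}$ is symmetric), so by the spectral theorem there is an orthogonal matrix $O$ with $O' M O = \mathrm{Diag}(\la_{1,n},\dots,\la_{n,n})$, the $\la_{j,n}$ being the eigenvalues of $M$. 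Setting $\chi = O' G$, the vector $\chi$ is again standard Gaussian (orthogonal transformations preserve the standard Gaussian law), so its components $\chi_{1,n},\dots,\chi_{n,n}$ are i.i.d.\ $\chi^2(1)$ once squared; writing $\langle G, M G\rangle = \langle \chi, O'MO\, \chi\rangle = \sum_{j=1}^n \la_{j,n}\chi_{j,n}^2$ and relabelling $\chi_{j,n}^2$ as $\chi_{j,n}$ gives the claimed representation.

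\textbf{The moment formulas.} From the representation $\langle Y,AY\rangle \stackrel{(d)}{=}\sum_j \la_{j,n}\chi_{j,n}$ with $\chi_{j,n}\sim\chi^2(1)$ i.i.d., one reads off $\bE(\langle Y,AY\rangle) = \sum_j \la_{j,n} = \mathrm{tr}(M) = \mathrm{tr}(\Ga^{1/2}A\Ga^{1/2}) = \mathrm{tr}(A\Ga) = \mathrm{tr}(\Ga A)$, using $\bE\chi^2(1)=1$ and the cyclic invariance of the trace. Similarly, since $\mathrm{Var}(\chi^2(1))=2$ and the $\chi_{j,n}$ are independent, $\mathrm{Var}(\langle Y,AY\rangle) = \sum_j \la_{j,n}^2\,\mathrm{Var}(\chi_{j,n}) = 2\sum_j \la_{j,n}^2 = 2\,\mathrm{tr}(M^2) = 2\,\mathrm{tr}((\Ga^{1/2}A\Ga^{1/2})^2) = 2\,\mathrm{tr}((A\Ga)^2)$, again by cyclicity of the trace. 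Alternatively the moment formulas can be obtained directly by expanding $\bE(\sum_{i,j}A_{ij}Y_iY_j)$ and $\bE((\sum_{i,j}A_{ij}Y_iY_j)^2)$ and using Wick's formula (Isserlis' theorem) for fourth moments of the centered Gaussian vector $Y$, which would make the proof self-contained without invoking the diagonalization; but the diagonalization route is shorter.

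\textbf{Main obstacle.} There is no serious obstacle here — this is a textbook computation. The only point requiring a little care is the well-definedness and symmetry of $\Ga^{1/2}$: since $\Ga$ is a (symmetric positive semidefinite) covariance matrix, $\Ga^{1/2}$ exists as a symmetric positive semidefinite square root, as already recalled in Section~\ref{s:intro}. One should also note that the eigenvalues $\la_{j,n}$ of $\Ga^{1/2}A\Ga^{1/2}$ are real (the matrix is real symmetric), so the representation makes sense. Everything else is the spectral theorem plus elementary properties of the $\chi^2$ distribution and the trace.
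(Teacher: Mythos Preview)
Your proof is correct and is the standard argument. Note that the paper does not actually supply a proof of Lemma~\ref{VGprop}: it is stated as a known auxiliary result on Gaussian quadratic forms and used without further justification, so there is nothing to compare against beyond observing that your diagonalization argument is exactly the textbook route one would expect.
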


Observe that under $P^n_{\theta_0}$, ${\bf{x}}_n$ is a centered Gaussian random variable with covariance $\Ga_n=\frac{1}{2\pi}T_n(f_{\theta_0})$. Then, since $T_n(f_{\theta_0})^{-1} T_n(g_{\theta_0}^u) T_n(f_{\theta_0})^{-1}$ 
is a real symmetric matrix, under $P^n_{\theta_0}$,
$$
\langle u,\nabla F_n(\theta_0)\rangle \stackrel{(d)}{=}\sum_{j=1}^n \lambda_{j,n}^u(\chi_{j,n}-1),
$$
where $(\lambda_{j,n}^u)_{j=1,\dots,n}$ are the eigenvalues of 
$$
B^u_{\theta_0}=\frac{1}{2}T_n(f_{\theta_0})^{-1/2}T_n\PAR{g_{\theta_0}^u}T_n(f_{\theta_0})^{-1/2}.
$$ 
 Therefore, under $P_{\theta_0}^n$ 
$$
\langle u,Z_n\rangle \stackrel{(d)}{=}\sum_{j=1}^n \frac{\sqrt{2}\lambda_{j,n}^u}{\sqrt{n}}\xi_{j,n}
$$
where $\xi_{j,n}=\PAR{\chi_{j,n}-1}/\sqrt{2}$ ($1\le j\le n$, $n\ge 1$) are i.i.d.  centered random variables having unitary variance. 
To obtain the convergence of $Z_n$, we  use the following Lemma, which is an obvious corollary of Lindenberg theorem (see \cite{Bill} for instance). 
\begin{lem} \label{linde}
Let $(\xi_{j,n})_{ n\geq 1, 1\leq j \leq n}$ be a sequence of i.i.d centered random variables having unitary variance and   let $(v_{j,n})_{n\ge 1,1\leq j \leq n}$ be a triangular array of real numbers. Assume further that
  \begin{enumerate}
  \item $
  \lim_{n\to +\infty} \sup_{1\le j\le n} \ABS{v_{j,n}}=0$,
\item $
\lim_{n\to +\infty}\sum_{j=1}^n v_{j,n}^2=\tau^2 >0.$
  \end{enumerate}

\noindent
Then, as $n\to +\infty,$ $\sum_{j=1}^n v_{j,n} \xi_{j,n}$ converges in distribution to a centered Gaussian distribution with variance~$\tau^2$.\\
\end{lem}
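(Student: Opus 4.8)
The plan is to read this off from the Lindeberg--Feller central limit theorem for triangular arrays, applied to $Y_{j,n}:=v_{j,n}\xi_{j,n}$, $1\le j\le n$. For each fixed $n$ the variables $Y_{1,n},\dots,Y_{n,n}$ are independent and centered, with $\bE\PAR{Y_{j,n}^2}=v_{j,n}^2$, so the sum of variances of the $n$-th row equals $\sum_{j=1}^n v_{j,n}^2$, which converges to $\tau^2>0$ by hypothesis 2. Since this limiting normalization is positive and finite, it suffices to verify the following Lindeberg condition: for every $\veps>0$,
$$
\lim_{n\to+\infty}\sum_{j=1}^n \bE\SBRA{Y_{j,n}^2\,\Mo_{\ABS{Y_{j,n}}>\veps}}=0 .
$$

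To check it, I would set $a_n:=\sup_{1\le j\le n}\ABS{v_{j,n}}$, which tends to $0$ by hypothesis 1 (one may assume $a_n>0$, as otherwise the $n$-th row is identically zero). Let $\xi$ be a random variable with the common distribution of the $\xi_{j,n}$, so $\bE\PAR{\xi}=0$ and $\bE\PAR{\xi^2}=1$. On the event $\BRA{\ABS{Y_{j,n}}>\veps}$ one has $\ABS{\xi_{j,n}}>\veps/\ABS{v_{j,n}}\ge \veps/a_n$, and therefore, using that the $\xi_{j,n}$ are identically distributed,
$$
\sum_{j=1}^n \bE\SBRA{Y_{j,n}^2\,\Mo_{\ABS{Y_{j,n}}>\veps}}\;\le\;\PAR{\sum_{j=1}^n v_{j,n}^2}\,\bE\SBRA{\xi^2\,\Mo_{\ABS{\xi}>\veps/a_n}} .
$$
The first factor on the right is bounded (it converges to $\tau^2$), and the second tends to $0$ as $n\to+\infty$ by dominated convergence, since $\bE\PAR{\xi^2}=1<\infty$ and $\veps/a_n\to+\infty$. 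This gives the Lindeberg condition, and the Lindeberg--Feller theorem yields that $\sum_{j=1}^n v_{j,n}\xi_{j,n}$ converges in distribution to $\cN\PAR{0,\tau^2}$.

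There is no serious obstacle here --- this is precisely the ``obvious corollary'' of Lindeberg's theorem referred to in the statement. The only point needing a little attention is the one used above: the i.i.d.\ assumption is what lets one replace, simultaneously over all $j$ in the $n$-th row, the truncated second moment $\bE\SBRA{\xi_{j,n}^2\,\Mo_{\ABS{\xi_{j,n}}>\veps/a_n}}$ by the single vanishing quantity $\bE\SBRA{\xi^2\,\Mo_{\ABS{\xi}>\veps/a_n}}$. If one preferred to avoid invoking Lindeberg--Feller, an equivalent route is to expand the characteristic functions, $\bE\exp\PAR{it\,v_{j,n}\xi_{j,n}}=1-\tfrac12 t^2 v_{j,n}^2+o\PAR{t^2 v_{j,n}^2}$ with the remainder uniform in $j$ thanks to hypothesis 1, multiply over $j$, and pass to the limit using hypothesis 2 to recover $\exp\PAR{-\tfrac12\tau^2 t^2}$; but quoting Lindeberg--Feller is cleaner.
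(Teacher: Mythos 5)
Your proof is correct and follows exactly the route the paper intends: the paper gives no proof of this lemma, merely calling it ``an obvious corollary of Lindeberg's theorem,'' and your verification of the Lindeberg condition (bounding the truncated second moments uniformly in $j$ via $a_n=\sup_j\ABS{v_{j,n}}\to 0$ and the identical distribution of the $\xi_{j,n}$, then using dominated convergence) is precisely the standard argument that fills in the omitted details. No gaps.
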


We first check Condition 1 for the sequence $v_{j,n}=\sqrt{2}n^{-1/2}\lambda_{j,n}^u$. 
Since  $B^u_{\theta_0}$ 
  is an Hermitian matrix whose eigenvalues are $\PAR{\la_{j,n}^u}_{1\le j\le n}$,  its spectral radius 
 $$
 \rho_n(u):=\sup_{1\le j\le n} | \lambda_{j,n}^u|
 $$ 
 is given by 
 $$
 \rho_n(u)=\sup_{x\in\dC^n\backslash\{0\}} \frac{\ABS{x^*B^u_{\theta_0}x}}{x^*x}=\frac{1}{2}\sup_{x\in\dC^n\backslash\{0\}} \frac{\ABS{x^*T_n(f_{\theta_0})^{-1/2}T_n(g_{\theta_0}^u)T_n(f_{\theta_0})^{-1/2}x}}{x^*x}.
 $$
 Observe that for any $y\in\dC^n$, and any integrable symmetric function $h$,
$$
y^*T_n(h) y=\int_{-\pi}^\pi \ABS{\sum_{k=1}^n \textup{e}^{ikx} y_k}^2 h(x) dx 
$$ 
and therefore that 
\begin{equation}
\label{MajTn}
\ABS{y^*T_n(h) y} \le y^*T_n\PAR{\ABS{h}} y.
\end{equation}
 Then, we get   
$$
\rho_n(u)\le \frac{1}{2} \sup_{x\in\dC^n\backslash\{0\}}  \frac{x^*T_n(f_{\theta_0})^{-1/2}T_n(\ABS{g_{\theta_0}^u})T_n(f_{\theta_0})^{-1/2}x}{x^*x},
$$
which can be written as 
$$
\rho_n(u)\le \frac{1}{2} \NRM{T_n\PAR{\ABS{g_{\theta_0}^u}}^{1/2} T_n\PAR{f_{\theta_0}}^{-1/2}}_{2,n}^2,
$$
since $\ABS{g_{\theta_0}^u}$ is a nonnegative symmetric function on $[-\pi,\pi]$. 

By Assumption \ref{Der}, the functions 
$f=f_{\theta_0}$ and  $g=\ABS{g_{\theta_0}^u}$ satisfy Equation \eqref{Cfg} with  $\beta_1=\al(\theta_0)-\de$ and $\beta_2=\al\PAR{\theta_0}+\de$  (for any $\de>0$).  Then, applying Lemma \ref{NormS}, for any $\de>0$, we get 
$$
 \rho_n(u) < K_\de n^{2\de}
$$
where the finite positive constant $K_\de$ does not depend on $n$. 
This implies  that 
$$
\lim_{n\to +\infty} \sup_{1\le j\le n}\ABS{v_{j,n}}=\lim_{n\to +\infty} \frac{\sqrt{2} \rho_n(u)}{\sqrt{n}}=0.
$$
This proves that Condition 1 of Lemma \ref{linde} is fulfilled. Let us now study the asymptotic~of 
$$
\sum_{j=1}^n v_{j,n}^2=\frac{2}{n}\sum_{j=1}^n \PAR{\la_{j,n}^u}^2.
$$ 
By definition of the $\la_{j,n}^u$, we get 
 $$\begin{array}{rcl}
\displaystyle\sum_{j=1}^n v_{j,n}^2&=&\frac{1}{2n}{\rm{tr}}\PAR{\SBRA{T_n(f_{\theta_0})^{-1/2}T_n(g_{\theta_0}^u)T_n(f_{\theta_0})^{-1/2}}^2}  \\
&=&\frac{1}{2n}{\rm{tr}}\PAR{\SBRA{T_n( f_{\theta_0})^{-1}T_n(g_{\theta_0}^u)}^2}.
\end{array}
$$
Observe that if $\al\PAR{\theta_0}>-1$, $f_{\theta,1}=f_{\theta,2}=f_\theta$ and $g_{\theta,1}=g_{\theta,2}=g_\theta^u$ satisfy the assumptions of Theorem \ref{ThRou} with $\be=\al$ on $\Theta^*=B(\theta_0,r)$ for $r$ chosen small enough. 
Otherwise, for $r$ small enough, $f_\theta$ and $g_{\theta,1}=g_{\theta,2}=g_\theta^u$ satisfy the assumptions of Theorem \ref{ThRou2} on $\Theta^*=B(\theta_0,r)$. Hence, applying one of these theorems, we get 
$$
\lim_{n\to +\infty} \sum_{j=1}^n v_{j,n}^2=\frac{1}{4\pi}\int_{-\pi}^\pi \frac{g_{\theta_0}^u(x)^2}{f_{\theta_0}(x)^2}dx.
$$
that is by definition of $g_{\theta_0}^u$ and $I(\theta_0)$,
$$
\lim_{n\to +\infty} \sum_{j=1}^n v_{j,n}^2= u^* I(\theta_0) u. 
$$
Hence, by Lemma \ref{linde}, under $P_{\theta_0}^n$, $\langle u, Z_n\rangle $ converges in distribution,  to a centered Gaussian random variable whose variance is $u^* I(\theta_0) u$. In other words, under $P_{\theta_0}^n$, $\langle u, Z_n\rangle $ converges in distribution to $\langle u, G\rangle$ with $G$ a centered Gaussian random vector whose covariance matrix is $I(\theta_0)$. Since this holds for any $u\in\dR^m$, under $P_{\theta_0}^n$, $Z_n$ converges in distribution to $G$. The proof of Lemma \ref{Zncv} is then complete. 

\subsection{Proof of Lemma \ref{F2cv}}

Let us consider two integers $1\le j,k\le m$. We recall that $\partial_{k}^\ell f_{\theta}$ is defined by \eqref{derpart} and set 
\begin{equation}
\label{An}
A_{n,\theta}(g)= T_n(f_{\theta})^{-1}T_n\PAR{g}.
\end{equation}
Then, since $\frac{\partial F_n}{\partial \theta_k}$ is given by \eqref{Fn'}, for any $\theta\in B(\theta_0,r)$, 
$$
\frac{\partial ^2F_n}{\partial \theta_j \partial \theta_k}(\theta)=G_{n,1}(\theta)+G_{n,2}(\theta)+G_{n,3}(\theta)
$$
where 
$$
G_{n,1}(\theta)=\pi \langle{\bf{x}}_n, A_{n,\theta}(\partial^2_{j,k}f_{\theta})T_n(f_{\theta})^{-1}{\bf{x}}_n\rangle
-\frac{1}{2} {\rm{tr}}\PAR{A_{n,\theta}(\partial^2_{j,k}f_{\theta})},
$$
$$
G_{n,2}(\theta)=-\pi \langle {\bf{x}}_n,\SBRA{A_{n,\theta}(\partial_{j}f_{\theta})A_{n,\theta}(\partial_{k}f_{\theta})+A_{n,\theta}(\partial_{k}f_{\theta})A_{n,\theta}(\partial_{j}f_{\theta})}T_n(f_{\theta})^{-1} {\bf}{x}_n
\rangle,
$$
and 
$$
G_{n,3}(\theta)=\frac{1}{2} {\rm{tr}}\PAR{T_n\PAR{\partial_{k}f_{\theta}}T_n(f_{\theta})^{-1}T_n\PAR{\partial_{j}f_{\theta}}T_n(f_{\theta})^{-1}}.
$$
By Lemma \ref{VGprop}, under $P_{\theta_0}^n$, $G_{n,1}(\theta_0)$ is a centered square integrable random variable and 
$$
\bE_{P_{\theta_0}^n}\PAR{G_{n,1}^2(\theta_0)}={\rm{Var}}_{P_{\theta_0}^n}G_{n,1}(\theta_0)=\frac{1}{2}{\rm{tr}}\PAR{A_{n,\theta_0}\PAR{\partial_{j,k}^2f_{\theta_0}}^2}.
$$ 
By Assumption \ref{Der}, if $\al\PAR{\theta_0}>-1$, $f_{\theta,1}=f_{\theta,2}=f_\theta$ and $g_{\theta,1}=g_{\theta,2}=\partial_{j,k}^2 f_{\theta}$ satisfy the assumptions of Theorem \ref{ThRou} (up to a proper choice of a smaller $r$) with $\beta=\alpha$. Moreover if  $\al\PAR{\theta_0}\le -1$, by Assumptions \ref{Der} and \ref{AssNI}, $f_\theta$ and $g_{\theta,1}=g_{\theta,2}=\partial_{j,k}^2 f_{\theta}$ satisfy the assumptions of Theorem \ref{ThRou2}. Hence, by definition of $A_{n,\theta_0}$, 
$$
\lim_{n\to +\infty}\frac{1}{n^2}\bE_{P^n_{\theta_0}}\PAR{G_{n,1}^2(\theta_0)}=0,
$$
which implies that $G_{n,1}(\theta_0)/n$ converges ${P^n_{\theta_0}}$-almost surely to $0$. \\

Moreover, again by Lemma \ref{VGprop}, under $P^n_{\theta_0}$, $G_{n,2}(\theta_0)/n$ is a square integrable random variable with mean 
$$\begin{array}{rcl}
m_n&=&\frac{1}{n}\bE_{P^n_{\theta_0}}\PAR{G_{n,2}(\theta_0)}\\[5pt]
&=&-\frac{1}{2n} {\rm{tr}}\PAR{A_{n,\theta_0}(\partial_{j}f_{\theta_0})A_{n,\theta_0}(\partial_{k}f_{\theta_0})+A_{n,\theta_0}(\partial_{k}f_{\theta_0})A_{n,\theta_0}(\partial_{j}f_{\theta_0})}\\[5pt]
&=&-\frac{1}{n} {\rm{tr}}\PAR{A_{n,\theta_0}(\partial_{j}f_{\theta_0})A_{n,\theta_0}(\partial_{k}f_{\theta_0})}
\end{array}
$$
and variance 
$$
\si_n^2=\frac{1}{n^2} 
{\rm{tr}}\PAR{\SBRA{A_{n,\theta_0}(\partial_{j}f_{\theta_0})A_{n,\theta_0}(\partial_{k}f_{\theta_0})}^2}
+ \frac{1}{n^2} {\rm{tr}}\PAR{A_{n,\theta_0}(\partial_{j}f_{\theta_0})^2A_{n,\theta_0}(\partial_{k}f_{\theta_0})^2}.
$$
As previously, if $\al\PAR{\theta_0}>-1$ (respectively $\al\PAR{\theta_0}\le -1$), we can apply Theorem \ref{ThRou} (respectively Theorem \ref{ThRou2}). These theorems prove that 
$$
\lim_{n\to +\infty} m_n=-\frac{1}{2\pi}\int_{-\pi}^\pi \frac{\partial_{k}f_{\theta_0}(x) \partial_{j}f_{\theta_0}(x) }{f_{\theta_0}(x)^{2}} dx=-2I(\theta_0)_{kj}\quad \textrm{ and }\quad \lim_{n\to +\infty} \si_n^2=0.
$$
This 
implies that $G_{n,2}(\theta_0)/n$ converges ${P^n_{\theta_0}}$-almost surely to $-2 I(\theta_0)_{kj}$. Since $G_{n,3}(\theta_0)/n=-m_n/2$, 
$$
\lim_{n\to 0} \frac{1}{n}\frac{\partial ^2F_n}{\partial \theta_k \partial \theta_j}(\theta_0)=-I(\theta_0)_{kj}.
$$
Since this holds for any $1\le j,k\le m$, $\nabla^2 F_n(\theta_0)/n$ converges ${P^n_{\theta_0}}$-almost surely to $- I(\theta_0)$, which concludes the proof of Lemma \ref{F2cv}.

\subsection{Proof of Lemma \ref{F3cv}}
Let us focus on $\frac{\partial^3F_n}{\partial \theta_k^3}$, where $1\le k\le m$. 
 We recall that $A_{n,\theta}$  is defined in~\eqref{An} and set for the sake of simplicity, 
 $$
 \partial^{2}_{k} f_\theta=\partial_{k,k}^2 f_\theta \quad \textrm{ and } \quad 
 \partial^{3}_{k} f_\theta=\partial_{k,k,k}^2 f_\theta
 $$
 where  $\partial_{(k_1,k_2,k_3)}^\ell f_{\theta}$ is given by \eqref{derpart}. Then, for any $\theta\in B(\theta_0,r)$, we get  
$$
\frac{\partial^3F_n}{\partial \theta_k^3} (\theta)=H_{n,1}(\theta)+H_{n,2}(\theta)+H_{n,3}(\theta)+H_{n,4}(\theta)
$$
where 
$$
H_{n,1}(\theta)=\pi \langle{\bf{x}}_n,A_{n,\theta}(\partial_{k}^3f_{\theta})T_n(f_{\theta})^{-1}{\bf{x}}_n\rangle
,
$$
$$
H_{n,2}(\theta)=-3\pi \langle {\bf{x}}_n,\SBRA{A_{n,\theta}(\partial_{k}^2f_{\theta})A_{n,\theta}(\partial_{k}f_{\theta})+A_{n,\theta}(\partial_{k}f_{\theta})A_{n,\theta}(\partial_{k}^2f_{\theta})}
T_n(f_{\theta})^{-1} {\bf}{x}_n
\rangle,
$$
$$\begin{array}{rcl}
H_{n,3}(\theta)&=&6\pi \langle {\bf{x}}_n,  A_{n,\theta}(\partial_{k}f_{\theta})^3
T_n(f_{\theta})^{-1} {\bf}{x}_n
\rangle
\end{array}
$$
and 
$$\begin{array}{rcl}
H_{n,4}(\theta)&=&-{\rm{tr}}\PAR{A_{n,\theta}\PAR{\partial_k f_{\theta}}^3}+\frac{3}{2} {\rm{tr}}\PAR{A_{n,\theta}\PAR{\partial_k f_{\theta}}A_{n,\theta}\PAR{\partial_k^2 f_{\theta}}}-\frac{1}{2}{\rm{tr}}\PAR{A_{n,\theta}\PAR{\partial_k^3 f_{\theta}}}
\end{array}
$$

\medskip
\noindent
{\bf{Control of $\boldsymbol{H_{n,1}}$}}

Observe that since $T_n(f_{\theta_0})$ is an Hermitian matrix,
$$
H_{n,1}(\theta)=\pi\langle  T_n(f_{\theta_0})^{-1/2 }x_n, T_n(f_{\theta_0})^{1/2} A_{n,\theta}(\partial_{k}^3f_{\theta})T_n(f_{\theta})^{-1} x_n\rangle.
$$ 
Therefore, by \eqref{MN2}
$$\begin{array}{rcl}
\ABS{H_{n,1}(\theta)}&\le& \pi \NRM{T_n(f_{\theta_0})^{-1/2 }x_n}^2 
\NRM{T_n(f_{\theta_0})^{1/2} 
A_{n,\theta}(\partial_{k}^3f_{\theta})T_n(f_{\theta})^{-1}
T_n(f_{\theta_0})^{1/2}}_{2,n}\\[5pt]
\end{array}
$$
Hence by applying  Equation \eqref{MN}, we get 
$$
\ABS{H_{n,1}(\theta)}\le\pi\NRM{T_n(f_{\theta_0})^{-1/2 }x_n}^2  \NRM{T_n(f_{\theta})^{-1/2 }T_n(f_{\theta_0})^{1/2 }}_{2,n}^2 \NRM{T_n(\ABS{\partial_{k}^3f_{\theta}})^{1/2 }T_n(f_{\theta})^{-1/2 }}_{2,n}^2.
$$

Let us now consider $\veps>0$. Then, by continuity of $\al$, we can choose $r$ sufficiently small so that 
$$
 \al(\theta_0)-\veps \le \al\PAR{\theta} \le  \al(\theta_0)+\veps
$$
for any $\theta\in B(\theta_0,r)=\Theta^*\subset \Theta.$ Then, Assumption \ref{Der} implies that $f=f_{\theta}$ satisfies \eqref{Cfg} with $\beta_1=\al(\theta_0)-2\veps $  and a constant $c_1$ which does not depend on $\theta\in B(\theta_0,r)$. Note also that  $g=f_{\theta_0}$ satisfies \eqref{Cfg} with $\beta_2= \al(\theta_0)+\veps>\beta_1$. Then, by Lemma \ref{NormS}, we get 
$$\forall \theta\in B(\theta_0,r),\, 
\NRM{T_n(f_{\theta})^{-1/2 }T_n(f_{\theta_0})^{1/2 }}_{2,n}^2 \le K n^{3\veps}
$$
where the finite constant $K=K_{\theta_0,r,\veps}$ does not depend on $n$ and $\theta$. 

Moreover, $g=\ABS{\partial_k^3 f_{\theta}}$ is a nonnegative symmetric function which satisfies \eqref{Cfg} with $\beta_2=\al(\theta_0)+2\veps >\beta_1$ and a constant $c_2$ which does not depend on $\theta\in B(\theta_0,r)$. 
Therefore by Lemma \ref{NormS}, 
$$\forall \theta\in B(\theta_0,r),\, 
\ABS{H_{n,1}(\theta)} \le K' n^{7\veps} \NRM{T_n(f_{\theta_0})^{-1/2 }x_n}^2 
$$
where the finite constant  $K'=K'_{\theta_0,r,\veps}$ does not depend on $n$ and $\theta$.
Therefore, 
$$
\frac{1}{n^{3/2}}\sup_{\theta\in B(\theta_0,r)} \ABS{H_{n,1}(\theta)}\le K' n^{7\veps -1/2} \frac{\NRM{T_n(f_{\theta_0})^{-1/2 }x_n}^2 }{n}.
$$
Under $P^n_{\theta_0}$, $T_n(f_{\theta_0})^{-1/2 }x_n$ is a centered Gaussian random vector with $\frac{1}{2\pi} {\rm{Id}}_n$ as covariance matrix. Then, by the strong law of large numbers, 
$$
 \frac{\NRM{T_n(f_{\theta_0})^{-1/2 }x_n}^2 }{n}\xrightarrow[n\to +\infty]{P^n_{\theta_0}\textrm{-a.s.}} \frac{1}{\pi}.
$$
 Therefore, choosing $\veps$ and $r$ small enough, we get
\begin{equation}
\label{contHn1}
 \frac{1}{n^{3/2}}\sup_{\theta\in B(\theta_0,r)} \ABS{H_{n,1}(\theta)}\xrightarrow[n\to+\infty]{P^n_{\theta_0}\textrm{-a.s.}} 0.
 \end{equation}

\medskip
\noindent
{\bf{Control of $\boldsymbol{H_{n,2}}$ and $\boldsymbol{H_{n,3}}$}} 

Proceeding as for $H_{n,1}$, one check  that for $r$ small enough, for $\ell\in\BRA{2,3}$,
\begin{equation}
\label{contHn}
 \frac{1}{n^{3/2}}\sup_{\theta\in B(\theta_0,r)} \ABS{H_{n,\ell}(\theta)}\xrightarrow[n\to+\infty]{P^n_{\theta_0}\textrm{-a.s.}} 0.
 \end{equation} 

\medskip
\noindent
{\bf{Control of $\boldsymbol{H_{n,4}}$}}

Assume first that $\al\PAR{\theta_0}>-1$. For $1\le \ell\le 3$, consider $f_{\theta,\ell}=f_{\theta}$ and $g_{\theta,\ell}=\partial_k^3 f_{\theta}$. Then, by Assumptions \ref{Der},  these functions satisfy assumptions  of Theorem \ref{ThRou} on the compact set $\Theta^*= B(\theta_0,r)$ (choosing $r$ small enough) with $\beta=\al$. Then, applying this theorem, we get that 
$$
\sup_{n}\sup_{\theta \in B(\theta_0,r)} \frac{1}{n}\ABS{{\rm{tr}}\PAR{\prod_{\ell=1}^3 T_n\PAR{f_{\theta}}^{-1}T_n\PAR{\partial_k^3 f_{\theta}}}} <+\infty,
$$  
that is
$$
\sup_{n}\sup_{\theta\in B(\theta_0,r)}\frac{1}{n} \ABS{{\rm{tr}}\PAR{A_{n,\theta}\PAR{\partial_k^3 f_{\theta}}}} <+\infty
$$
Assumption \ref{Der} also allows us to control the two other terms of $H_{n,4}$ by applying Theorem~\ref{ThRou}. This leads to 
\begin{equation}
\label{contHn4}
\sup_{n}\frac{1}{n}\sup_{\theta\in B(\theta_0,r)} \frac{1}{n} \ABS{H_{n,4}(\theta)}<+\infty.
\end{equation}

If $\al\PAR{\theta_0}\le -1$, applying Theorem \ref{ThRou2} instead of Theorem \ref{ThRou}, we see that Equation~\eqref{contHn4} still holds. \\

\medskip
\noindent
{\bf{Control of $\boldsymbol{\frac{\partial^3 F_n}{\partial \theta_k^3}}$}} 

Equations \eqref{contHn1}, \eqref{contHn} and \eqref{contHn4} leads to 
$$
 \frac{1}{n^{3/2}}\sup_{\theta\in B(\theta_0,r)} {\ABS{\frac{\partial^3 F_n}{\partial \theta_k^3}}}\xrightarrow[n\to+\infty]{P^n_{\theta_0}\textrm{-a.s}}0
$$
for $r$ small enough. \\

Computing $\frac{\partial^3 F_n}{\partial \theta_j \partial \theta_k\partial \theta_l}$ and then using the same arguments as for $j=k=l$, one obtains that 
$$
 \frac{1}{n^{3/2}}\sup_{\theta\in B(\theta_0,r)} {\ABS{\frac{\partial^3 F_n}{\partial  \theta_j \partial \theta_k\partial \theta_l}}}\xrightarrow[n\to+\infty]{P^n_{\theta_0}\textrm{-a.s}}0,
$$
which concludes the proof. 



\bibliographystyle{plain}

\bibliography{LAN}

\def\cprime{$'$}
\begin{thebibliography}{10}

\bibitem{Beran94}
J.~Beran.
\newblock {\em Statistics for long-memory processes}, volume~61 of {\em
  Monographs on Statistics and Applied Probability}.
\newblock Chapman and Hall, New York, 1994.

\bibitem{Bill}
P.~Billingsley.
\newblock {\em Probability and measure}.
\newblock John Wiley \& Sons Inc., New York, third edition, 1995.
\newblock A Wiley-Interscience Publication.

\bibitem{Bondon}
P.~Bondon and W.~Palma.
\newblock A class of antipersistent processes.
\newblock {\em J. Time Ser. Anal.}, 28(2):261--273, 2007.

\bibitem{dah}
R.~Dahlhaus.
\newblock Efficient parameter estimation for self-similar processes.
\newblock {\em Ann. Statist.}, 17(4):1749--1766, 1989.

\bibitem{dahCor}
R.~Dahlhaus.
\newblock Correction: ``{E}fficient parameter estimation for self-similar
  processes'' [{A}nn. {S}tatist. {\bf 17} (1989), no. 4, 1749--1766].
\newblock {\em Ann. Statist.}, 34(2):1045--1047, 2006.

\bibitem{MR2729419}
M.~Falk.
\newblock Local asymptotic normality in a stationary model for spatial
  extremes.
\newblock {\em J. Multivariate Anal.}, 102(1):48--60, 2011.

\bibitem{fox}
R.~Fox and M.~S. Taqqu.
\newblock Central limit theorems for quadratic forms in random variables having
  long-range dependence.
\newblock {\em Probab. Theory Related Fields}, 74(2):213--240, 1987.

\bibitem{MR1364260}
B.~Garel and M.~Hallin.
\newblock Local asymptotic normality of multivariate {ARMA} processes with a
  linear trend.
\newblock {\em Ann. Inst. Statist. Math.}, 47(3):551--579, 1995.

\bibitem{GJ80}
C.~W.~J. Granger and R.~Joyeux.
\newblock An introduction to long-memory time series models and fractional
  differencing.
\newblock {\em J. Time Ser. Anal.}, 1(1):15--29, 1980.

\bibitem{Graybill}
Franklin~A. Graybill.
\newblock {\em Matrices with applications in statistics}.
\newblock Wadsworth Statistics/Probability Series. Wadsworth Advanced Books and
  Software, Belmont, Calif., second edition, 1983.

\bibitem{Hosking81}
J.~R.~M. Hosking.
\newblock Fractional differencing.
\newblock {\em Biometrika}, 68(1):165--176, 1981.

\bibitem{Kolmo40}
A.~N. Kolmogorov.
\newblock {W}ienersche {S}piralen und einige andere interessante {K}urven in
  {H}ilbertsche {R}aum.
\newblock {\em C. R. (Dokl.) Acad. Sci. URSS}, 26:115--118, 1940.

\bibitem{MR1897918}
D.~Lai, X.~Wang, and J.~Wiorkowski.
\newblock Local asymptotic normality for multivariate nonlinear {AR} processes.
\newblock {\em Stoch. Stoch. Rep.}, 72(3-4):277--287, 2002.

\bibitem{MR0126903}
L.~Le~Cam.
\newblock Locally asymptotically normal families of distributions. {C}ertain
  approximations to families of distributions and their use in the theory of
  estimation and testing hypotheses.
\newblock {\em Univ. california Publ. Statist.}, 3:37--98, 1960.

\bibitem{judith2}
O.~Lieberman, R.~Rosemarin, and J.~Rousseau.
\newblock Asymptotic theory for maximum likelihood estimation of the memory
  parameter in stationary gaussian processes.
\newblock To be published in Econometric Theory.

\bibitem{judith2F}
O.~Lieberman, R.~Rosemarin, and J.~Rousseau.
\newblock Asymptotic theory for maximum likelihood estimation of the memory
  parameter in stationary gaussian processes (full version).
\newblock Available at
  http://www.ceremade.dauphine.fr/~rousseau/LRRlongmemo.html.

\bibitem{judith}
O.~Lieberman, J.~Rousseau, and D.~M. Zucker.
\newblock Valid asymptotic expansions for the maximum likelihood estimator of
  the parameter of a stationary, {G}aussian, strongly dependent process.
\newblock {\em Ann. Statist.}, 31(2):586--612, 2003.
\newblock Dedicated to the memory of Herbert E. Robbins.

\bibitem{LoubLan}
J.-M. Loubes and D.~Paindaveine.
\newblock Local asymptotic normality property for lacunar wavelet series.
\newblock {\em ESAIM: Probability and Statistics}, 2011.
\newblock doi:10.1051/ps/2009005.

\bibitem{Mand68}
B.~Mandelbrot and J.~Van~Ness.
\newblock Fractional {B}rownian motion, fractional noises and applications.
\newblock {\em Siam Review}, 10:422--437, 1968.

\bibitem{MR762984}
D.~Pollard.
\newblock {\em Convergence of stochastic processes}.
\newblock Springer Series in Statistics. Springer-Verlag, New York, 1984.

\bibitem{Taqqu94}
G.~Samorodnitsky and M.~S. Taqqu.
\newblock {\em Stable non-{G}aussian random processes}.
\newblock Stochastic Modeling. Chapman \& Hall, New York, 1994.
\newblock Stochastic models with infinite variance.

\bibitem{MR1652247}
A.~W. van~der Vaart.
\newblock {\em Asymptotic statistics}, volume~3 of {\em Cambridge Series in
  Statistical and Probabilistic Mathematics}.
\newblock Cambridge University Press, Cambridge, 1998.

\bibitem{MR0012401}
A.~Wald.
\newblock Tests of statistical hypotheses concerning several parameters when
  the number of observations is large.
\newblock {\em Trans. Amer. Math. Soc.}, 54:426--482, 1943.

\end{thebibliography}

\end{document}